\documentclass[11pt,reqno]{amsart}
\setlength{\voffset}{-.25in}
\usepackage{amssymb,latexsym}
\usepackage{graphicx}
\usepackage{mathtools}
\usepackage{color}
\usepackage[T1]{fontenc}
\usepackage{hyperref}
\usepackage{amsmath}
\usepackage{breqn}
\usepackage[toc,page]{appendix}
\usepackage{url}    
\usepackage{cancel}
\usepackage{multirow}
\newcommand{\bburl}[1]{\textcolor{blue}{\url{#1}}}

\textwidth=6.175in
\textheight=9.0in
\headheight=13pt
\calclayout

\makeatletter
\newcommand{\monthyear}[1]{%
  \def\@monthyear{\uppercase{#1}}}
\newcommand{\volnumber}[1]{%
  \def\@volnumber{\uppercase{#1}}}
\AtBeginDocument{%
\def\ps@plain{\ps@empty
  \def\@oddfoot{\@monthyear \hfil \thepage}%
  \def\@evenfoot{\thepage \hfil \@volnumber}}
\def\ps@firstpage{\ps@plain}
\def\ps@headings{\ps@empty
  \def\@evenhead{%
    \setTrue{runhead}%
    \def\thanks{\protect\thanks@warning}%
    \uppercase{\ }\hfil}%
  \def\@oddhead{%
    \setTrue{runhead}%
    \def\thanks{\protect\thanks@warning}%
    \hfill\uppercase{Winning Strategy for Multiplayer and Multialliance Zeckendorf Game}}%
  \def\@evenhead{%
    \setTrue{runhead}%
    \def\thanks{\protect\thanks@warning}%
    \uppercase{The Fibonacci Quarterly}\hfil}%
  \let\@mkboth\markboth
  \def\@evenfoot{%
    \thepage \hfil \@volnumber}%
  \def\@oddfoot{%
    \@monthyear \hfil \thepage}%
  }%
\footskip=25pt
\pagestyle{headings}%
}
\makeatother

\theoremstyle{plain}
\numberwithin{equation}{section}
\newtheorem{thm}{Theorem}[section]

\newtheorem{property}{Property}



\newcommand{\ignore}[1]{}














\newcommand\be{\begin{eqnarray}}
\newcommand\ee{\end{eqnarray}}
\newcommand\bea{\begin{eqnarray}}
\newcommand\eea{\end{eqnarray}}
\newcommand\ben{\begin{enumerate}}
\newcommand\een{\end{enumerate}}


\newtheorem{lem}[thm]{Lemma}

\newtheorem{defi}[thm]{Definition}













\begin{document}

\monthyear{TBD}
\volnumber{Volume, Number}
\setcounter{page}{1}

\title{Winning Strategy for Multiplayer and Multialliance Zeckendorf Game}

\author[]{Anna Cusenza}
\address{University of California, Los Angeles, Los Angeles, CA 90095}
\email{ascusenza@g.ucla.edu}

\author[]{Aidan Dunkelberg}
\address{Williams College, Williamstown, MA 01267}
\email{awd4@williams.edu}

\author[]{Kate Huffman}
\address{University of Alabama, Tuscaloosa, AL 35401}
\email{klhuffman@crimson.ua.edu}

\author[]{Dianhui Ke}
\address{University of Michigan, Ann Arbor, MI}
\email{kdianhui@umich.edu}

\author[]{Daniel Kleber}
\address{Carleton College, 300 North College Street, Northfield, MN 55057}
\email{kleberd@carleton.edu}

\author[]{\\Steven J. Miller}
\address{Department of Mathematics and Statistics, Williams College, Williamstown, MA 01267}
\email{sjm1@williams.edu,  Steven.Miller.MC.96@aya.yale.edu}

\author[]{Clayton Mizgerd}
\address{Department of Mathematics and Statistics, Williams College, Williamstown, MA 01267}
\email{cmm12@williams.edu}

\author[]{Vashisth Tiwari}
\address{University of Rochester, Rochester, NY 14627}
\email{vtiwari2@u.rochester.edu}

\author[]{Jingkai Ye}
\address{Whitman College, 280 Boyer Avenue, Walla Walla, WA, 99362}
\email{yej@whitman.edu.edu}

\author[]{Xiaoyan Zheng}
\address{Washington University in St. Louis, St. Louis, MO 63130}
\email{zhengxiaoyan@wustl.edu}

\date{\today}

\begin{abstract}
Edouard Zeckendorf proved that every positive integer $n$ can be uniquely written \cite{Ze} as the sum of non-adjacent Fibonacci numbers, known as the Zeckendorf decomposition. Based on Zeckendorf's decomposition, we have the Zeckendorf game for multiple players.
We show that when the Zeckendorf game has at least $3$ players, none of the players have a winning strategy for $n\geq 5$. Then we extend the multi-player game to the multi-alliance game, finding some interesting situations in which no alliance has a winning strategy. This includes the two-alliance game, and some cases in which one alliance always has a winning strategy.


\end{abstract}

\maketitle

\ \\ Keywords: Zeckendorf decompositions, $n$-person games. \\ \ \\ MSC 2020: 11P99, 91A06 (primary), 11K99 (secondary). \\ \ \\


\section{Introduction}

\subsection{Rules of Zeckendorf Game}
The Fibonacci sequence is one of the most fabulous sequences with a number of beautiful properties. Among these properties is a theorem by Edouard Zeckendorf \cite{Ze}. Zeckendorf proved that every positive integer $n$ can be uniquely written as the sum of distinct non-consecutive Fibonacci numbers. This sum is also known as the \textit{Zeckendorf decomposition} of $n$. We define the Fibonacci sequence as $F_1 = 1, F_2 = 2, F_3 = 3$, and $F_{n+1}=F_n+F_{n-1}$. If we stuck with $F_1=F_2=1$, we lose uniqueness.

Baird-Smith, Epstein, Flint and Miller \cite{BEFMgen,BEFM} create a game based on the Zeckendorf decomposition. We quote from \cite{BEFM} to describe the game.

We first introduce some notation. By $\{1^n\}$ or $\{{F_1}^n\}$ we mean $n$ copies of $1$, or $F_1$. If we have $3$ copies of $F_1$, $2$ copies of $F_2$, and $7$ copies of $F_4$, we write either $\{{F_1}^3 + {F_2}^2 + {F_4}^7 \}$ or $\{1^3 + 2^2 + 5^7\}$.
\begin{defi}[The Zeckendorf Game]\label{defi:zg}
At the beginning of the game, there is an unordered list of $n$ 1's. Let $F_1 = 1, F_2 = 2$, and $F_{i+1} = F_i + F_{i-1}$; therefore the initial list is $\{{F_1}^n\}$. On each turn, a player can do one of the following moves.
\begin{enumerate}
\item If the list contains two consecutive Fibonacci numbers, $F_{i-1}, F_i$, these can be combined to create $F_{i+1}$. We denote this move by $F_{i-1} + F_i = F_{i+1}$.
\item If the list has two of the same Fibonacci number, $F_i, F_i$, then
\begin{enumerate}
\item if $i=1$, $F_1, F_1$ can be combined to create $F_2$, denoted by $1+1=2$,
\item if $i=2$, a player can change $F_2, F_2$ to $F_1, F_3$, denoted by $2+2=1+3$, and
\item if $i \geq 3$, a player can change $F_i, F_i$ to $F_{i-2}, F_{i+1}$, denoted by $F_i + F_i = F_{i-2}+ F_{i+1}$.
\end{enumerate}
\end{enumerate}
The players alternate moving. The game ends when one player moves to create the Zeckendorf decomposition.
\end{defi}

In the following results, $p$ represents the number of players in the Zeckendorf game.

\subsection{Previous Results} Baird-Smith, Epstein, Flint and Miller \cite{BEFM} proved the following results in the Zeckendorf game.

\begin{thm}
Every game terminates in a finite number of moves at the Zeckendorf decomposition.
\end{thm}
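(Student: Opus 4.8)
The plan is to find a strictly monotone, bounded, integer-valued potential on game states: if such a potential exists, no play of the game can go on forever, so every play must reach a position admitting no legal move, and then it remains only to check that such a ``stuck'' position must be the Zeckendorf decomposition.

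I would start with two easy observations. Every one of the moves (1), (2a), (2b), (2c) replaces some Fibonacci numbers by others with the same total, so the sum of the list is invariant and always equals $n$; hence every reachable state is a multiset of Fibonacci numbers summing to $n$ (so there are only finitely many possibilities), each entry lies in $\{1,\dots,n\}$, and the list has at most $n$ entries. Next, a state admits no legal move exactly when it contains no repeated Fibonacci number and no pair $F_{i-1},F_i$ of consecutive ones --- i.e., when it is a sum of distinct, non-consecutive Fibonacci numbers equal to $n$. By Zeckendorf's theorem such a representation exists and is unique, so the only stuck position is the Zeckendorf decomposition of $n$.

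The heart of the argument is the monovariant. For a state $S=\{F_{i_1},\dots,F_{i_k}\}$ set $\Phi(S)=\sum_{j=1}^{k}F_{i_j}^2$. Using $F_{i+1}=F_i+F_{i-1}$ (equivalently $F_{i-2}=F_i-F_{i-1}$), a short computation shows each move strictly increases $\Phi$ by a positive integer: move (1) changes it by $F_{i+1}^2-F_i^2-F_{i-1}^2=2F_iF_{i-1}$; moves (2a) and (2b) change it by $+2$; and move (2c) changes it by $F_{i-2}^2+F_{i+1}^2-2F_i^2=2F_{i-1}^2$. Since on any reachable state $\Phi(S)=\sum_j F_{i_j}^2\le\big(\sum_j F_{i_j}\big)^2=n^2$, and $\Phi$ increases by at least $2$ with every move, no play lasts more than $n^2$ moves. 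Combined with the two observations above, every play halts after finitely many moves and does so at the Zeckendorf decomposition.

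I expect no real obstacle here; the one thing to watch is the small-index moves $1+1=2$, $2+2=1+3$, and $F_1+F_2=F_3$, whose $\Phi$-increase should be verified directly rather than read off the generic formulas (the first two are exactly the ``$+2$'' cases above). The only genuine idea is to guess a quantity --- here $\sum F_{i_j}^2$, which grows because $(a+b)^2>a^2+b^2$ and the analogous Fibonacci identities --- that is pushed strictly upward by every legal move.
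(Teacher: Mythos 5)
Your argument is correct, and the verification is complete as stated: the four moves change $\Phi=\sum_j F_{i_j}^2$ by $2F_iF_{i-1}$, $+2$, $+2$, and $2F_{i-1}^2$ respectively (all positive integers), the total sum of the list is conserved so $\Phi\le n^2$ throughout, hence any play has at most $n^2/2$ moves; and a position with no legal move has distinct, pairwise non-consecutive Fibonacci summands, which by the uniqueness half of Zeckendorf's theorem must be the Zeckendorf decomposition of $n$. One point of comparison, though: this paper does not prove the statement at all --- it is quoted as a prior result from Baird-Smith, Epstein, Flint and Miller \cite{BEFM}, so there is no internal proof to measure yours against. The proof in \cite{BEFM} is likewise a monovariant/termination argument (and that literature goes further, giving much sharper bounds on game length, on the order of $n$ rather than the $O(n^2)$ your potential yields), but your sum-of-squares potential is a clean, self-contained one-step argument that handles all four move types uniformly, with the small-index moves $1+1=2$ and $2+2=1+3$ checked directly as you note. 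So as a blind proof of the theorem it stands on its own; it simply should be read as an alternative to the cited proof rather than a reconstruction of one appearing in this paper.
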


\begin{thm}\label{thm1}
In the two-player game $(p=2)$, for any $n>2$, player $2$ always has a winning strategy.
\end{thm}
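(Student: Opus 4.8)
The plan is to use a strategy-stealing argument, after first pinning down the opening. Starting from $\{1^n\}$ the only Fibonacci number present is $F_1$, so the \emph{only} legal move is $F_1 + F_1 = F_2$; hence Player $1$'s first move is forced and produces the position $P_0 := \{1^{n-2}, 2\}$ with Player $2$ to move. Consequently the whole game is, up to relabelling whose turn it is, the game played from $P_0$ with Player $2$ moving first, and (using the already-quoted fact that every game terminates, so there are no draws and the game is determined) it suffices to show that the player to move from $P_0$ wins. For $n=3$ this is immediate: Player $2$'s only move from $P_0=\{1,2\}$ is $F_1+F_2=F_3$, which already produces the Zeckendorf decomposition $\{3\}$. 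For $n=4$ it is also immediate: from $P_0=\{1,1,2\}$ Player $2$ plays $F_1+F_2=F_3$ to reach $\{1,3\}$, which \emph{is} the Zeckendorf decomposition of $4$ (recall $4$ is not a Fibonacci number in the indexing $F_1=1,F_2=2,F_3=3,F_4=5$). So the substance is the case $n\ge 5$.

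For $n\ge 5$, suppose for contradiction that Player $1$ has a winning strategy $\sigma$. After the forced opening, $\sigma$ wins for Player $1$ against every line from $P_0$, i.e.\ $P_0$ is a position in which the player \emph{not} to move wins. I would then have Player $2$ "steal" $\sigma$: Player $2$ makes a suitable reply from $P_0$ and thereafter mirrors the moves $\sigma$ prescribes, with the two players' roles interchanged, deriving a win for Player $2$ and the desired contradiction. The one thing that has to be engineered is that $P_0$ is not literally a fresh instance of the game, so Player $2$ needs an explicit "tempo-wasting" device to realign parity. The intended device is the pair of moves $F_1+F_2=F_3$ (legal at $P_0$, sending it to $\{1^{n-3},3\}$) together with $F_2+F_2=F_1+F_3$ (legal at $\{1^{n-4},2,2\}$, also sending it to $\{1^{n-3},3\}$): these give two routes to the common position $\{1^{n-3},3\}$ differing in length by exactly one move, which is precisely the handle a strategy-stealing argument needs.

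Concretely, the steps I would carry out are: (1) verify the opening is forced and reduce to analysing $P_0$; (2) dispose of $n=3,4$ by the hand calculations above; (3) for $n\ge 5$, assume $\sigma$ exists, let Player $2$ answer $P_0$ by $F_1+F_1=F_2$ to reach $\{1^{n-4},2,2\}$, and then follow $\sigma$ — reading "Player $1$'s prescribed move" off the current position — with the convention that whenever $\sigma$ would call for the configuration Player $2$ has already created, Player $2$ instead plays the tempo move $F_2+F_2=F_1+F_3$; (4) check that this recipe is always legal, that it stays on a $\sigma$-consistent line, and hence that it is Player $2$, not Player $1$, who makes the terminal move, contradicting that $\sigma$ is winning.

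The main obstacle is steps (3)–(4): one must show the correspondence between positions-as-seen-by-Player-$1$ and positions-as-seen-by-Player-$2$ is well defined along the \emph{entire} game, that the tempo move $F_2+F_2=F_1+F_3$ is available exactly when it is invoked (and that invoking it does not desynchronise the two lines or quietly hand Player $1$ the last move), and that the stolen line terminates on Player $2$'s move. Getting this bookkeeping exactly right is the crux; the forced-opening reduction and the $n=3,4$ base cases are the easy part.
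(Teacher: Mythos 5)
You should first note that the paper does not actually prove this statement: Theorem \ref{thm1} is quoted as a previous result from \cite{BEFM}, and the paper explicitly remarks that the known proof is non-constructive, so there is no in-paper argument to compare against. Judged on its own, your outline assembles the standard ingredients of that known strategy-stealing proof --- the forced opening $1+1=2$, the reduction to the position $P_0=\{1^{n-2},2\}$ with Player $2$ to move, determinacy of this finite drawless game, the base cases $n=3,4$, and the key ``diamond'' identity that $1+1=2$ followed by $2+2=1+3$ yields the same multiset $\{1^{n-3},3\}$ as the single move $1+2=3$ --- and this is indeed the right spirit.

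However, your steps (3)--(4), which you yourself flag as the crux, are precisely the content of the theorem and are not carried out, and the mirroring convention you describe does not obviously work. After Player $2$ answers $P_0$ with $1+1=2$, it is Player $1$, the owner of $\sigma$, who moves from $\{1^{n-4},2,2\}$; the clean parity contradiction materializes only if $\sigma$ replies $2+2=1+3$ (then $\{1^{n-3},3\}$ is a previous-player-win position, and Player $2$ could instead have reached it via $1+2=3$ standing on the winning side of it). If $\sigma$ instead replies $1+1=2$ or $1+2=3$, the real line and your ``virtual'' line no longer coincide: the boards differ by extra $2$'s, so $\sigma$'s prescriptions cannot simply be transported from one line to the other, the tempo move $2+2=1+3$ need not be legal at the moment you want to invoke it, and Player $1$'s subsequent $\sigma$-moves can keep the two lines desynchronized for the rest of the game. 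Moreover, a check of the top of the game tree shows the assumption that Player $1$ wins is locally consistent in that branch (for instance, it is consistent that $\{1^{n-5},2,3\}$, which is where $1+2=3$ from $\{1^{n-4},2,2\}$ leads, is a previous-player-win position --- indeed it is forced to be one, since it is the unique child of $\{1^{n-3},3\}$), so no purely local diamond swap at the opening closes the argument. Handling the case in which Player $1$'s strategy declines to ever play $2+2=1+3$ at the critical junctures is the real work of the theorem, and your proposal supplies no mechanism for it.
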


It is worth noting that the proof on Theorem \ref{thm1} is non-constructive, and it is still an open problem to find the constructive winning strategy of player $2$.

\subsection{New Results}

\begin{thm}\label{thm2} When $n \geq 5$, for any $p \geq 3$ no player has a winning strategy.
\end{thm}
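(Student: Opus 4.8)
The plan is to reduce the $p$-player assertion to a two-player parity battle. Fix $p \geq 3$ and a player $k$, and write $Z(n)$ for the number of summands in the Zeckendorf decomposition of $n$. Group all of $k$'s opponents into a single coalition $\textrm{Anti}\text{-}k$ that controls every move made on a turn $\not\equiv k \pmod p$ and that is declared the winner exactly when the terminal move is made on such a turn; this is a finite two-player perfect-information game with no draws, so exactly one of the two players has a winning strategy. Moreover a strategy for $k$ in this two-player game is literally a strategy for $k$ in the $p$-player game, and conversely a $p$-player winning strategy for $k$ beats the coalition; hence $k$ has a winning strategy in the $p$-player game if and only if $\textrm{Anti}\text{-}k$ has none in the two-player game. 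So it suffices to produce, for every $k$ and every $p \geq 3$, a winning strategy for $\textrm{Anti}\text{-}k$. The case where $k$ exceeds the largest possible game length (so $k$ never moves) is trivial, and the remaining cases are what the strategy must address.

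The bookkeeping tool is that the number of summands in the list is a monovariant: it is $n$ at the start, $Z(n)$ at the end, it drops by exactly $1$ on each ``combining'' move ($1+1=2$ and $F_{i-1}+F_i=F_{i+1}$), and it is unchanged by each ``splitting'' move ($2+2=1+3$ and $F_i+F_i=F_{i-2}+F_{i+1}$ for $i\ge 3$). Consequently the length of any play equals $n - Z(n)$ plus the number of splitting moves made, so the residue of the game length mod $p$ — which determines who makes the terminal move — is governed entirely by the count of splitting moves. Since $p \geq 3$, the coalition makes at least two consecutive moves between any two consecutive turns of $k$, and this slack is exactly what lets $\textrm{Anti}\text{-}k$ simultaneously (i) never present $k$ with a position one move away from the Zeckendorf decomposition, and (ii) insert or decline a splitting move to steer the running split-count to the value it wants. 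The positions where this is possible are abundant: whenever the list contains two equal Fibonacci numbers and also admits a combining move (e.g. as soon as a few $1+1=2$ moves have produced several $2$'s), the mover may choose between a combining and a splitting move.

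Thus the strategy for $\textrm{Anti}\text{-}k$ is to advance ``toward the Zeckendorf decomposition'' with combining moves while maintaining the invariant that the number of moves in a straight-line completion from the current position, added to the current split-count, leaves the terminal move on a coalition turn; whenever $k$'s last move has spoiled this invariant, or whenever playing straight would hand $k$ a position one move from the end, $\textrm{Anti}\text{-}k$ uses one of its two-in-a-row moves to perform a splitting (or combining) move that both repairs the parity and neutralizes the threat. Because the coalition gets two moves for each of $k$'s, any single parity change $k$ can force can be answered, and the coalition can set the final residue itself. I would formalize this with an inductive lemma: from every position reachable in a game with $n \geq 5$, the coalition can reach the Zeckendorf decomposition in a number of further moves of whichever residue mod $p$ it needs, ending on one of its own turns, no matter how $k$'s interleaved moves are chosen.

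The hard part will be proving that lemma near the end of the game, where the reachable-position graph becomes thin — often a nearly forced chain of moves, and some positions (such as $\{2^2\}$) force a splitting move — so the parity correction must be committed during the bulk of the game before any such forced tail is entered, and one must rule out that $k$ can drive the position into a forced tail that dumps a one-move-from-end position on the coalition at the wrong moment. I expect to handle this by a finite case analysis of the possible endgame shapes (lists of the form $\{1^a,2^b\}$ and their immediate descendants, and the short chains leading into $\{Z(n)\}$-type positions), showing that a careful mover always has a split-free or otherwise parity-flexible route, combined with a direct check of the small values $n = 5, 6, 7, \dots$ up to the point where the general bulk-flexibility argument takes over.
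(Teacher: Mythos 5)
Your reduction to a two-player game against the coalition Anti-$k$ is sound (Zermelo's theorem applies, and a winning strategy for the coalition does rule out a winning strategy for $k$), and your monovariant observation — that the game length is $n - Z(n)$ plus the number of splitting moves, so the identity of the terminal mover is governed by the split count — is correct and is a genuinely different line from the paper. But as it stands the argument has a real gap: everything hinges on the ``inductive lemma'' that from any reachable position the coalition can steer the residue of the remaining length mod $p$ while never handing $k$ a position one move (or a short forced chain) from the Zeckendorf decomposition, and you do not prove it — you explicitly defer it to an unspecified endgame case analysis. The difficulty is not cosmetic: your parity-repair mechanism requires a splitting move to be available on a coalition turn exactly when a correction is needed, and near the end of the game (lists with no repeated summands, or thin forced chains such as those passing through $\{2^2\}$-type positions) no such move may exist, while $k$ can also spend splitting moves to disturb the residue at moments when the coalition cannot answer. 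Ruling this out is precisely the hard content of the theorem, and ``I expect to handle this by a finite case analysis'' is not an argument; moreover the endgame shapes depend on $n$ and $p$, so it is not even clear the analysis is finite without further work.

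For contrast, the paper sidesteps all global bookkeeping with a local strategy-stealing trick: if a supposed winning path for player $m$ contains the consecutive moves $1+1=2$, $1+1=2$, $2+2=1+3$ with the middle move made by an opponent, that opponent can instead play $1+2=3$, reaching the \emph{identical} position one move earlier, so the same continuation now ends on player $m-1$'s turn — a contradiction; one then shows (Lemmas \ref{lem2.1.1} and \ref{lem2.1.2}) that the opponents can always force such a block for $p\ge 3$ and $n\ge 13$, with brute force covering $5\le n<13$. If you want to salvage your approach, the most economical fix is to import this substitution idea as your parity-adjustment device (it produces the same state with one fewer move, so no endgame availability issue arises); otherwise you must actually supply the endgame lemma, which is the bulk of the work and is currently missing.
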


Now we extend the multi-player games to the game of more than two alliances, or teams. We use $t$ to represent the number of teams. We then have the following theorems.

\begin{thm}\label{thm3}
For any $n\geq2k^2+4k$ and $t \geq 3$, if each team has exactly $k=t-1$ consecutive players, then no team has a winning strategy.
\end{thm}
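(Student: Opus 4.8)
The plan is to push the residue-control idea behind Theorem~\ref{thm2} from single players up to whole teams. Fix a team $T$. Since exactly one player moves per turn and every game terminates at the Zeckendorf decomposition of $n$ (see \cite{BEFM}), team $T$ wins if and only if one of its $k$ players makes the final move. So it suffices to show that for each $T$ the remaining $t-1$ teams, acting as a single coalition $C$, can guarantee that the last move is made by one of their players no matter how $T$ plays; equivalently, writing $p=t(t-1)$ and letting $M$ be the total number of moves, that $C$ can force $M \bmod p$ to lie outside the block $B_T$ of $k$ consecutive turn-slots occupied by $T$. It helps to first record the arithmetic of $M$: each move either merges two summands into one (shrinking the list by one; this covers rule (1) and rule (2a)) or splits a repeated summand (rules (2b), (2c), leaving the length unchanged), and the final list has $Z(n)$ summands, so $M = (n-Z(n)) + s$ where $s\ge 0$ counts the splitting moves played. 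Thus controlling $M \bmod p$ is the same as controlling $s \bmod p$.

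The heart of the argument is an endgame-flexibility lemma, and this is the step I expect to be the main obstacle. What is needed is: for $n \ge 2k^2+4k$ and $t\ge 3$, whenever the coalition's block of $k^2 = (t-1)^2$ consecutive turns immediately preceding a block of $T$ is about to begin near the end of the game, $C$ can use those turns to insert any prescribed number of splitting moves from the range $\{0,1,\dots,D\}$ with $D \ge 2k-1$, while still leaving the position one from which the continuation is forced (or again controllable by $C$ at its next block). The mechanism should be to exhibit, and maintain as an invariant, a reservoir of at least $2k$ equal summands (most naturally copies of $2$): each pair in the reservoir can be spent on one split-and-remerge cycle that adds $1$ to $s$, so performing $j$ of these realizes every value $0 \le j \le 2k-1$. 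The hypothesis $n \ge 2k^2+4k$ is what should guarantee that such a reservoir is present and survives both the earlier play (over which $C$ has no control) and the $k$ moves $T$ makes in between; here $t\ge 3$, i.e. $k\ge 2$, is essential, since for $k=1$ the single intervening turn leaves $C$ no room both to inject and to recover, matching the fact that the two-team game is exactly the two-player game, where team $2$ wins. Making ``reachable and unavoidable late in the game,'' ``survives $T$'s interference,'' and ``keeps the continuation controllable'' all simultaneously precise is where the real work is, and should require tracking the list of summands through the endgame in the style of \cite{BEFM}.

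Granting the lemma, the conclusion is a short pigeonhole count. Take the last occurrence of a coalition block of $k^2$ turns ending immediately before a block $B_T$ of $T$, and let $\tau$ be the turn-slot at which the game would end if every move from there on were forced. The coalition first chooses $\delta \in \{0,\dots,D\}$ extra splitting moves, moving the projected ending slot to $\tau+\delta$. During its own $k$ turns $T$ can then delay the end by at most $k-1$ further moves (it cannot waste all $k$, as that would hand $C$ the win at once), shifting the ending slot to $\tau+\delta+\delta'$ for some $\delta'\in\{0,\dots,k-1\}$. For $T$ to win, $\tau+\delta+\delta'$ would have to land in $B_T$ for some such $\delta'$, which happens exactly when $\tau+\delta$ lies in a fixed block of $2k-1$ consecutive slots. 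Since $2k-1 < k^2+k = p$ and $D\ge 2k-1$, the $D+1 \ge 2k$ consecutive residues $\tau,\tau+1,\dots,\tau+D$ cannot all lie in that $(2k-1)$-element block, so $C$ can pick $\delta$ with $\tau+\delta+\delta' \notin B_T$ for every admissible $\delta'$. Then the final move is a coalition player's and $T$ does not win; as $T$ was arbitrary, no team has a winning strategy.
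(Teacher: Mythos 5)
There is a genuine gap: the ``endgame-flexibility lemma'' that carries your whole argument is never proved, and as stated it is doubtful. You need the coalition $C$, \emph{late in the game}, to hold a reservoir of roughly $2k$ equal summands (copies of $2$) that survives $T$'s interference and can be spent on split-and-remerge cycles to shift the total move count by any amount in $\{0,\dots,2k-1\}$, while keeping the continuation ``forced or controllable.'' But late in the game the list is approaching the Zeckendorf decomposition: the $1$'s are largely exhausted and duplicate summands are scarce, so there is no reason such a reservoir exists, and the hypothesis $n\ge 2k^2+4k$ only buys resources at the \emph{start} of the game, not at the end. In addition, the final pigeonhole count leans on two unsupported claims: that a ``projected ending slot $\tau$'' is well defined (which presupposes exactly the controllability you are trying to prove), and that $T$ can insert at most $k-1$ extra splitting moves in its block (``it cannot waste all $k$'' is asserted, not argued). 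So the proposal as written does not close.

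It is also worth noting that the paper's route is structurally different and much lighter: it is a strategy-stealing argument applied to a hypothetical winning path, not a constructive control of $M \bmod p$. Assuming team $m$ has a winning strategy, the other $t-1$ teams (which supply $k^2\ge 3k$ consecutive players when $t\ge 4$) play, immediately after team $m$'s first moves, a block in which $k$ players do $1+1=2$, the next $k$ do $1+1=2$, and the next $k$ do $2+2=1+3$; since the middle $k$ players could instead each play $1+2=3$, the same position is reached $k$ moves sooner, so the last move slides from a team-$m$ player to a team-$(m-1)$ player, a contradiction. The bound $n\ge 2k^2+4k$ merely guarantees enough $1$'s at the beginning for these blocks, and the case $t=3$, $k=2$ is handled by a longer but analogous two-round variant. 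If you want to salvage your approach, the honest comparison is that your move-count identity $M=(n-Z(n))+s$ is fine, but the hard analytic content you postponed (endgame controllability of $s \bmod p$) is precisely what the paper's stealing trick avoids by operating at the start of the game on an assumed winning path.
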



\begin{thm}\label{thm4}
Let $n\geq 30$ and $p=6$. If one alliance has $4$ players and the other alliance has $2$ players, then the $4$-player alliance always has a winning strategy.
\end{thm}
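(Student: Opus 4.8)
The plan is to reduce the claim to controlling the residue modulo $6$ of the total number of moves $M$ played, and then to exploit the fact that the $4$-player alliance $A$ makes four of every six moves while the $2$-player alliance $B$ makes only two. Fix the cyclic turn order $1,2,\dots,6,1,2,\dots$; then the members of $A$ move on a fixed $4$-element set $S_A$ of residues mod $6$ and those of $B$ on the complementary $2$-element set $S_B$ (the precise interleaving of the alliances around the cycle is irrelevant to this reduction), and $A$'s alliance wins precisely when the game terminates at a move whose index lies in $S_A$, that is, when $M \bmod 6 \in S_A$. So it suffices to exhibit a strategy for $A$ that keeps $M \bmod 6$ out of $S_B$.

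First I would assemble the structural facts about game length. Every legal move preserves the sum $n$; the number of summands is non-increasing, strictly decreasing by one under moves of type (1) and (2a) and unchanged under the ``stalling'' moves (2b) and (2c); and the game ends exactly at the Zeckendorf decomposition, which has the minimum possible number $Z(n)$ of Fibonacci summands. Hence $M = (n - Z(n)) + T$, where $T$ counts the stalling moves actually played, so all the freedom in $M$ is freedom in $T$. The crux is a flexibility lemma: once the game reaches the ``endgame regime'' — positions close to the Zeckendorf decomposition that still contain a suitably isolated repeated summand $F_i$ with $i \ge 3$ — the player to move may choose either to advance toward termination or to perform a stalling move, and these choices can be chained, so an alliance that holds the next several turns can complete the game in any number of moves lying in an interval of at least $6$ consecutive values. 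This is the step that uses $n \ge 30$: one needs enough Fibonacci numbers present (through roughly $F_7 = 21$) to guarantee such an isolated repeated summand occurs and can be pushed around without colliding with the rest of the near-Zeckendorf part of the list.

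Granting the lemma, $A$'s strategy is: play a fixed fast completion strategy while tracking the move number, until the game first enters the endgame regime; because $A$ makes four of every six moves, $A$ can arrange (by adjusting its own earlier stalling) to be on move when this happens, or to re-enter the regime on one of its turns after $B$'s at most two intervening moves, and then, over its upcoming turns, $A$ inserts exactly the number of stalling moves needed to drive the eventual terminal index into $S_A$ before completing the decomposition itself. The quantitative check is that $A$'s reservoir of stalling flexibility (at least $6$ from the lemma, plus the slack guaranteed by $n \ge 30$) must outlast the at most $6$ residues $A$ may need to traverse together with the disruption $B$ can cause with its two moves per round — each $B$ move changes the running count by $1$ and the stall budget by at most $1$ — and this inequality is what forces the particular numerical threshold.

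The main obstacle is the flexibility lemma together with the interaction with $B$: one must pin down exactly which endgame positions admit the stall-or-advance dichotomy, show that $B$'s moves cannot shove the position permanently out of that class, and verify that for every $n \ge 30$ the ``reservoir beats $6$ plus $B$'s disruption'' count actually closes — which is presumably where a finite case analysis (on the tail of the Zeckendorf decomposition of $n$, or on $n$ modulo a small number) enters, and where $30$ rather than a smaller bound comes from. The reduction to $M \bmod 6$, the identity $M = (n - Z(n)) + T$, and the turn-counting are all routine by comparison.
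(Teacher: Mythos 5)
There is a genuine gap here: everything in your argument hinges on the ``flexibility lemma'' --- that once the game is near the Zeckendorf decomposition, the alliance holding the next several turns can finish the game in any total number of moves ranging over an interval of length at least $6$, and can do so robustly against the small alliance's interference --- and you never prove it; you explicitly defer it as ``the main obstacle.'' But that lemma \emph{is} the theorem. The routine parts you do carry out (the reduction to $M \bmod 6$, the identity $M=(n-Z(n))+T$ since moves of type (1) and (2a) each drop the summand count by one while (2b) and (2c) leave it fixed) are fine, but they only relocate the difficulty into $T$: the opposing alliance also chooses on its two moves per round whether to stall or to advance, the supply of stalling moves of the form $F_i+F_i=F_{i-2}+F_{i+1}$ is position-dependent and dries up as the list approaches the Zeckendorf state, and near-terminal positions typically admit very few legal moves, so ``insert exactly the number of stalls needed'' is not something you can assert without a concrete analysis of which positions admit the stall-or-advance dichotomy and why the small alliance cannot destroy it. It is worth noting that even in the two-player game the known result that player $2$ wins is non-constructive, which is a warning sign that directly steering the terminal move index is delicate; your quantitative claims (reservoir of size $6$, $B$ changing the stall budget by at most $1$ per move, the role of $n\geq 30$) are all unsubstantiated.

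The paper avoids this entirely and argues non-constructively: since a finite two-alliance game with no draws is determined, it suffices to show the $2$-player alliance has \emph{no} winning strategy, and this is done by a case analysis on the seating of the $4$-player alliance around the cycle. If its players are $4$ consecutive, or split $2+2$, the configuration reduces to earlier results (the $3$-team lemma with $k=2$, and the $3$-player game lemma, respectively); in the remaining $3+1$ split, a short strategy-stealing argument applies: the three consecutive big-alliance players threaten $1+1=2$, $1+1=2$, $2+2=1+3$, and the middle player can switch to $1+2=3$, shifting the purported final move from a small-alliance player $q$ to player $q-1$, who must lie in the big alliance because the two small-alliance players are not adjacent --- a contradiction. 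Your route, if the flexibility lemma were actually established, would give a stronger, constructive strategy, but as written the proposal does not prove the theorem.
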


\begin{thm}\label{thm5}
Let $n\geq 32$ and $p\geq 7$. If there are two alliances, one alliance with $p-2$ players (called the big alliance), and the other with exactly $2$ players (called the small alliance), then the big alliance always has a winning strategy.
\end{thm}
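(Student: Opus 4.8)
The plan is to translate the winning condition into a statement about the total number of moves modulo $p$, and then to show the big alliance's numerical majority lets it control that quantity.

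\textbf{Reformulating.} Since the game always ends at the Zeckendorf decomposition and the players move in the fixed cyclic order $1,2,\dots,p,1,2,\dots$, if a play lasts $M$ moves then the last move is made by player $((M-1)\bmod p)+1$, so the winning alliance is determined by $M\bmod p$. Now classify the moves of Definition~\ref{defi:zg}: the moves $F_{i-1}+F_i=F_{i+1}$ and $1+1=2$ decrease the number of terms in the list by one, the "splitting" moves $2+2=1+3$ and $F_i+F_i=F_{i-2}+F_{i+1}$ leave it unchanged, and no move ever increases it. Writing $Z(n)$ for the number of summands in the Zeckendorf decomposition of $n$, it follows that the number of term-decreasing moves in any play is exactly $n-Z(n)$, a constant, so $M=n-Z(n)+k$ where $k\ge 0$ is the number of splitting moves actually made. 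Hence the big alliance wins if and only if it can force $k$ into the set $G$ of $p-2$ residues mod $p$ that make $((M-1)\bmod p)+1$ one of its seats; the two forbidden residues depend on where the two small-alliance players sit, but the argument below will not care about that placement.

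\textbf{The reservoir.} The key point is that splitting moves are essentially discretionary: a big-alliance player may make one exactly when the current list contains a repeated Fibonacci term. I would first show that for $n\ge 32$ the big alliance can steer the game into, and then maintain, configurations that carry a plentiful supply of repeated terms (for instance by keeping a pool of uncombined $1$'s and $2$'s), so that on each of its turns a big-alliance player genuinely has two options: "advance" (a term-decreasing move, pushing the list toward the Zeckendorf decomposition) or "stall" (a splitting move, which increments $k$). Because the big alliance makes $p-2\ge 5$ of every $p$ consecutive moves while the small alliance makes only $2$, the big alliance can replenish this reservoir and adjust $k$ faster than the small alliance can drain it or eliminate the repeated terms; this is exactly where $p\ge 7$ enters. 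One then checks that, once $n\ge 32$, the set of values of $k$ the big alliance can realize while still being able to finish the game spans an interval of length at least $p$, so it meets $G$.

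\textbf{Executing the strategy, and the obstacle.} With the reservoir available, the big alliance plays as follows: it keeps track of "how few more moves could take the list to the Zeckendorf decomposition, and on whose seat that would land," and on every big-alliance turn it chooses "advance" or "stall" so as to keep an achievable completion falling on a big-alliance seat; since it gets at least five moves between consecutive small-alliance moves, it can cancel any shift the small alliance introduces on its two moves before the small alliance moves again. I expect the main difficulty to be proving that the reservoir is robust — that the small alliance cannot use its two moves per cycle to push the list into a state with no repeated term (hence no available splitting move) at precisely the moment the big alliance still owes one correction, nor force completion on a forbidden seat before the big alliance can respond. Making this precise amounts to isolating a family of "safe" configurations for the big alliance and verifying that safety survives an arbitrary small-alliance move followed by an appropriate big-alliance reply; the explicit threshold $n\ge 32$ (playing the role of $n\ge 30$ in Theorem~\ref{thm4}) is what absorbs the finitely many small-$n$ exceptions that this case analysis throws up.
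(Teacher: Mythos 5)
Your opening reformulation is correct and genuinely useful: only the moves $1+1=2$ and $F_{i-1}+F_i=F_{i+1}$ decrease the number of summands, so every play has length $M=n-Z(n)+k$ with $k$ the number of splitting moves, and the winning alliance is determined by $k\bmod p$. But from that point on what you have is a plan, not a proof, and the missing piece is exactly the hard part. You assert (i) that the big alliance can build and \emph{maintain} a reservoir of repeated entries for $n\ge 32$, (ii) that the set of values of $k$ it can realize spans an interval of length at least $p$, and (iii) that a family of ``safe'' configurations survives an arbitrary small-alliance move followed by a big-alliance reply; none of these is proved, and you explicitly flag (iii) as an open difficulty. The crux is the endgame: a splitting move is available only when the list contains a repeated entry, the small alliance also makes splitting moves (changing $k$ on its own schedule), and it can steer the late game toward positions in which all remaining moves are forced merges. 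The fact that the big alliance makes $p-2\ge 5$ of every $p$ moves does not by itself show it can still deliver an owed correction to $k\bmod p$ once repeated entries are gone; that is precisely what the unproved invariant would have to rule out. Note also that you are attempting a constructive control strategy, while the paper points out that even in the two-player game no constructive winning strategy is known --- a sign that closing this gap is likely harder than the theorem itself.

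The paper avoids all of this with a short non-constructive strategy-stealing argument (Lemmas \ref{lem5} and \ref{lem6}). Suppose the $2$-player alliance had a winning strategy, with some small-alliance player $q$ making the last move. The big alliance contains a block of at least $3$ consecutive players (a longer block when the two small players are adjacent), and after that block plays $1+1=2$, $1+1=2$, $2+2=1+3$ (or a longer scripted sequence in the adjacent case), one or two of those players can instead play $1+2=3$, reaching the identical position one or two moves sooner; the last move then falls on player $q-1$ or $q-2$, who belongs to the big alliance by the case hypothesis on where the two small players sit --- a contradiction. Since the finite two-alliance game has no draws, some alliance has a winning strategy, so it must be the big one. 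If you want to rescue your approach, you must state and prove the preservation lemma for your safe configurations; otherwise the shift-by-one-or-two stealing argument is the efficient route.
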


Lastly, we extend this to larger alliances with the following theorems.


\begin{thm}\label{thm6}
For any $n\geq4pb+2p-2b$, if one alliance contains more than two-thirds of the players and there is some integer $b$ such that if a player $i$ is not on the alliance, player $(i - b)\bmod p$ is on it, and the alliance has at least $2b$ players in a row, then that alliance has a winning strategy.
\end{thm}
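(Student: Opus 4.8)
The plan is to turn ``has a winning strategy'' into ``can force the residue modulo $p$ of the total move count,'' and then to extract that control from the alliance's run of $2b$ consecutive players together with the $b$-shift hypothesis.

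\emph{Reduction.} Number the players $1,\dots,p$ in cyclic order with player $1$ moving first, so that the $m$-th move is made by player $((m-1)\bmod p)+1$; hence the winner of a completed play is determined by $M\bmod p$, where $M$ is that play's move count. Sort the moves of Definition~\ref{defi:zg} by their effect on the number of summands: types (1) and (2a) drop it by exactly one, while types (2b) and (2c) leave it fixed. Every play starts with $n$ summands and ends at the Zeckendorf decomposition, which has some fixed number $Z(n)$ of terms, so every play makes exactly $n-Z(n)$ moves of the first kind; thus $M=n-Z(n)+S$, where $S\ge 0$ is the number of \emph{splitting} moves (types (2b),(2c)) that play happens to use. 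Set $G=\{\,r:\ \text{player }((n-Z(n)+r-1)\bmod p)+1\text{ is in the alliance }A\,\}$. A play is won by $A$ exactly when $S\bmod p\in G$; and since $r\mapsto((n-Z(n)+r-1)\bmod p)+1$ is a translation, $|G|=|A|>2p/3$, the set $G$ contains a run of $2b$ consecutive residues coming from the run of $2b$ consecutive alliance players, and $G^c$ contains no two residues differing by $b$ (this is what the hypothesis ``$i\notin A\Rightarrow(i-b)\bmod p\in A$'' becomes). So it suffices to give $A$ a strategy that forces $S\bmod p$ into this length-$2b$ window.

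\emph{A stalling lemma.} Next I would prove that when $n\ge 4pb+2p-2b$ there is a guaranteed ``flexible stretch'' of consecutive turns — long enough, and so positioned that no matter how the opponents play it contains the alliance's run of $2b$ players with room to spare before and after — throughout which the mover may choose either an advancing move or a \emph{stalling} move: a split $F_i+F_i=F_{i-2}+F_{i+1}$ (or $2+2=1+3$) of a Fibonacci number $F_i$, $i\ge 2$, present with multiplicity at least two, which raises $S$ by one and makes no net progress. This is checked by bookkeeping the multiplicities of the small Fibonacci numbers starting from the list of $n$ ones, and the bound $n\ge 4pb+2p-2b$ is precisely the slack needed so that this stretch outlasts every stall and clean-up move the strategy below requires.

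\emph{Forcing $S\bmod p$.} The alliance's strategy treats the currently available stalling moves as a scarce resource to be fought over: on its turn an alliance member either raises $S$ by stalling, or removes one of an opponent's future stalling options by combining away a repeated Fibonacci number (for instance via $F_{i-1}+F_i=F_{i+1}$), whereas an opponent can only raise $S$ or try to keep a repeated value alive. Because no two opponents are $b$ apart, every opponent turn is preceded, $b$ steps earlier, by an alliance turn; the alliance member there commits to a position from which — even after the $b-1$ intervening moves — the opponent can add at most one unit of $S$ beyond the alliance's intent, and the alliance spends its run of $2b$ consecutive turns as a single accounting window in which every such unit (bounded in total by the structure) is both registered and cancelled, so that $S\bmod p$ lands in the target window and $A$ wins. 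I expect the real obstacle to be exactly this covering step: specifying the position the covering alliance member should leave, showing it is robust to the $b-1$ moves that follow, and verifying that the count over the length-$2b$ window closes. The stalling lemma — hence the hypothesis $n\ge 4pb+2p-2b$ — enters precisely to keep the needed stall/clean-up options available until the Zeckendorf decomposition is reached. I would finish by checking the smallest admissible values of $n$ and $b$ by hand, and by noting that, specialized to two opponents with $b$ chosen to miss the single forbidden difference, the argument recovers Theorem~\ref{thm5}.
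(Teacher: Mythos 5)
Your reduction is fine as far as it goes: every move either drops the number of summands by one or keeps it fixed, so the total move count is $n-Z(n)+S$ with $S$ the number of splitting moves, and the winner depends only on $S\bmod p$; the hypothesis ``$i\notin A\Rightarrow (i-b)\bmod p\in A$'' does translate into the statement that no two non-alliance residues differ by $b$. But the proof stops there. The third step, where the alliance is supposed to actually force $S\bmod p$ into the target window, is only described in intentions (``commits to a position from which \dots the opponent can add at most one unit of $S$ beyond the alliance's intent \dots every such unit is both registered and cancelled''), and you yourself flag this covering/accounting step as the unresolved obstacle. That step is not a technicality: controlling the residue of the number of splits against adversarial play is essentially the whole theorem, and nothing in the proposal shows the alliance can do it. The ``stalling lemma'' is likewise asserted, not proved, and the claim that $n\geq 4pb+2p-2b$ is ``precisely the slack needed'' is never derived. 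So as written this is a plan with the central argument missing, not a proof.

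It is also worth noting that the paper takes a genuinely different, and much lighter, route: a strategy-stealing argument rather than a direct forcing of the move count. Because the alliance has more than two-thirds of the players, each round it can net at least one extra $2$ (each member plays $1+1=2$, each opponent removes at most two $2$'s via $2+2=1+3$), so it can stockpile $b$ spare $2$'s before its run of $2b$ consecutive turns. On that run, ``first $b$ players play $1+1=2$, next $b$ play $2+2=1+3$'' reaches exactly the same list as ``first $b$ players play $1+2=3$'' using the stockpiled $2$'s, but with $b$ more moves. If the opponents had a winning strategy from the common resulting state, replaying it after the shorter variant hands the win to whoever moves $b$ turns earlier, and the hypothesis on $b$ guarantees that player is in the alliance --- a contradiction. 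No control of $S\bmod p$ and no constructive strategy is needed (consistent with the fact that even the two-player result is non-constructive). If you want to salvage your approach, you would need to either supply the missing forcing argument for $S\bmod p$ --- which I expect is very hard --- or convert your ``$b$ stalls versus $b$ merges'' flexibility into exactly this kind of state-equality stealing argument.
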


\begin{thm}\label{thm7}
Let $n\geq 2p+4b$. If there is some integer $b$ such that if a player $i$ is not on the alliance then player $(i - b)\bmod p$ is on the alliance, and the alliance has at least $3b$ players in a row, then that alliance has a winning strategy.
\end{thm}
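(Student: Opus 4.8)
The plan is to hand the alliance an explicit strategy and prove that, played against any opponents, it forces the game to terminate on a turn belonging to an alliance member, which is the same as the alliance winning. The first step is to unpack the shift hypothesis combinatorially: since $j\mapsto(j-b)\bmod p$ is a bijection of $\{0,1,\dots,p-1\}$, the hypothesis says it sends the complement of the alliance injectively into the alliance, so the alliance holds at least half of the turns and, crucially, every non-member $i$ is ``watched'' by the alliance member who moves exactly $b$ turns before $i$; that member will be the one responsible for denying $i$ the final move. (If $b=0$, or if $b$ forces every player onto the alliance, the statement is trivial, so assume $b\ge1$ and a proper complement.)

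The technical core is an endgame lemma for the Zeckendorf game itself: there is a constant $M=M(b)$ such that, as long as the current list still contains sufficiently many (a function of $M$) copies of small Fibonacci numbers, the player to move has both a move that makes genuine progress toward the Zeckendorf decomposition and a \emph{stalling} move --- one built from the splitting moves $2+2=1+3$ and $F_i+F_i=F_{i-2}+F_{i+1}$ together with the merging move $1+1=2$ --- after which both kinds of move remain available for the next several turns. Informally, the reservoir of $1$'s can be recycled, so far from the end no one is ever forced to approach it, and a player who controls a run of consecutive turns can shift the total number of remaining moves by any prescribed bounded amount. The hypothesis $n\ge 2p+4b$ is used to guarantee the game is long enough that the alliance reaches such a recyclable position with its block of $3b$ consecutive players still to come, and that this block --- each single adjustment costing $O(b)$ turns --- is long enough to set the residue class of the move count so that, under optimal continuation, the terminal move would fall to an alliance member. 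This is precisely the step in which the factor $3$ in ``$3b$ players in a row'' does the work that the global two-thirds majority did in Theorem~\ref{thm6}.

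The last step describes the maintenance phase after the residue has been set: the alliance preserves the invariant that the list presented to any non-member is never one move from the Zeckendorf decomposition. This is enforced locally by each watcher, who either already has a winning move (and we are done) or plays a stalling move keeping the distance to termination at least $2$; a non-member lying among the $b-1$ turns between a watcher and its ward is itself watched, so the invariant propagates down the turn order, and the $3b$-in-a-row block is spent absorbing the worst case in which several denials must be chained or in which hostile players in such a gap burn through the current stock of stalling moves. I expect the main obstacle to be exactly this interleaving: one must show the stalling and denial moves are robust against an arbitrary schedule of opponent moves in a gap of length $b-1$ --- equivalently, that opponents cannot exhaust the recyclable reservoir (a quantity controlled by $n$ and the current configuration) faster than the alliance replenishes it --- and carefully tracking that reservoir is where the linear bound $n\ge 2p+4b$ and the extra factor of $b$ over the hypotheses of Theorem~\ref{thm6} are spent.
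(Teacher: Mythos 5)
There is a genuine gap: your plan is to build an \emph{explicit} winning strategy, and every load-bearing step of that plan is asserted rather than proved. The ``endgame lemma'' (existence of stalling moves that keep the distance to termination at least $2$ while remaining available for several turns), the claim that a block of consecutive alliance turns can ``set the residue class of the move count so that, under optimal continuation, the terminal move would fall to an alliance member,'' and the maintenance invariant that no non-member is ever handed a position one move from the Zeckendorf decomposition are precisely the hard parts. The total number of moves is not fixed: every player, including the opponents, changes it by their choice of merges versus splits, so the alliance cannot unilaterally pin down the move count modulo $p$; and near the end of the game the ``reservoir'' of repeated or consecutive small Fibonacci numbers needed for stalling disappears, so the watcher-by-watcher denial argument does not propagate. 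Note that even in the two-player game the winning strategy of Theorem \ref{thm1} is non-constructive and finding an explicit one is stated in this paper as an open problem; your route would require solving a strictly harder version of that problem, and nothing in the proposal (nor in the bound $n\geq 2p+4b$, which cannot by itself control endgame configurations) closes it.

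The missing idea is the strategy-stealing shift that the actual proof uses. Assume the opposing side has a winning strategy. When the block of $3b$ consecutive alliance players first comes up, $n\geq 2p+4b$ guarantees enough $1$'s for the first $2b$ of them to play $1+1=2$ and the next $b$ to play $2+2=1+3$; but the \emph{same} resulting list can instead be produced in only $2b$ moves, by $b$ plays of $1+1=2$ followed by $b$ plays of $1+2=3$. Replaying the assumed winning strategy from this identical position shifts every subsequent winning turn index back by exactly $b$, and the hypothesis that $(i-b)\bmod p$ is on the alliance whenever $i$ is not converts each of the opponents' winning turns into an alliance turn --- a contradiction, so the alliance has a winning strategy. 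This is non-constructive, needs no control of game length or endgame invariants, and is where the hypotheses ``$3b$ in a row'' and the shift-by-$b$ condition actually get used; your proposal does not contain this argument or a workable substitute.
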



\begin{thm}\label{thm8}
Assume we have one big alliance (sized $2d$) and one small alliance (sized $d$). In this two-alliance game consisting of $3d$ players ($d$ can be any positive integer), when the small alliance consists of $d$ consecutive players and the big alliance consists of $2d$ consecutive players, if $n\geq 12d^2+4d$, then the big alliance always has a winning strategy.
\end{thm}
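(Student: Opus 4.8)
The plan is to convert "the big alliance wins" into an arithmetic statement about the total number of moves, and then show the big alliance can steer that number. Label the players $1,\dots,3d$ in turn order; since the two alliances occupy complementary arcs of the cyclic seating, we may assume without loss of generality that the big alliance is $\{1,\dots,2d\}$ and the small alliance is $\{2d+1,\dots,3d\}$ (the case where the small alliance's block comes first is handled identically). If a play of the game lasts $T$ moves, then the final, Zeckendorf-creating move is made by player $((T-1)\bmod 3d)+1$, so the big alliance wins precisely when $T\bmod 3d\in\{1,2,\dots,2d\}$. Next I would record the bookkeeping invariant driving everything: among the five move types, types (1) and (2a) each lower the number of summands by one, while types (2b) and (2c) leave the count unchanged and the sum is always $n$. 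Since the game begins with $n$ summands and ends with $z(n)$ of them, where $z(n)$ is the number of terms in the Zeckendorf decomposition of $n$, we get the identity $T=(n-z(n))+S$, with $S$ the number of "splitting" moves (types (2b)/(2c)) played over the whole game. Hence the big alliance wins if and only if $S\bmod 3d$ lands in a fixed set of $2d$ residues, and the question becomes whether the big alliance — which moves first each round and controls $2d$ consecutive turns against the small alliance's $d$ — can force this, i.e. can control the pace of play.

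I would then run a potential-type argument on $r$, the minimum number of moves still needed to reach the Zeckendorf decomposition from the current position; note $r$ drops by at most one per move, and on a shortest path one can make it drop by exactly one. The big alliance's strategy has two regimes. In the \emph{holding} regime, whenever it is the start of the big alliance's block and $r>2d$, the big alliance makes its $2d$ moves so as to end with $r\ge d+1$: when $r$ is large it plays shortest-path moves, and once $r$ has descended near $3d$ it substitutes in up to $d$ "wasting" moves (splitting moves, which do not decrease $r$) so that the small alliance, having only $d$ turns, cannot complete the game on its turn. In the \emph{finishing} regime, the first time a big-alliance block begins with $r\le 2d$, the big alliance simply plays a shortest completion, which takes at most $2d$ moves and therefore ends inside its own block — a win. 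The remaining game-theoretic content is a short case check: from $r\le d+1$ at a handoff to the small alliance, $d$ moves later the big alliance sees $r\le 2d+1$ at the start of its block, and if the small alliance instead stalls to keep $r$ high it only postpones, since $S$ is bounded above and the "both sides stall" situation cannot persist forever; so the finishing regime is reached before the small alliance ever gets a completing turn.

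The crux — and the reason for the quadratic hypothesis $n\ge 12d^2+4d$ — is the structural lemma that the required wasting moves are genuinely available on demand: whenever $r$ lies in the dangerous window near $3d$, the position still contains two equal Fibonacci summands (or two consecutive summands that one can afford to defer), so that the big alliance really can burn a move without making progress, and it can be arranged that the game enters this window cleanly at a block boundary. Bounding how long the holding regime must run and how much of the initial pile is consumed before and during the endgame — roughly $3d$ moves per round over on the order of $4d$ rounds, plus linear correction terms of the type $-z(n)$ and $+4d$ — is what produces a bound quadratic in $d$, with $12d^2+4d$ the explicit constant that comes out of this accounting. I expect this availability-of-moves lemma for Zeckendorf-game positions, together with pinning down exactly where $r$ first drops into the endgame window, to be the main obstacle; once it is in hand, the rest is the modular arithmetic of $S$ and the finite case analysis at the handoff between the two alliances' blocks.
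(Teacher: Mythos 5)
Your reduction to tempo control is fine as far as it goes: the identity $T=(n-z(n))+S$ (combining moves drop the summand count by one, splitting moves preserve it) is correct, and so is the observation that the winner is determined by $T \bmod 3d$. But the proof then rests on a lemma you explicitly leave unproved, and that lemma is exactly where the difficulty lives: you need that whenever the distance-to-completion $r$ sits in the dangerous window near $3d$ at the start of a big-alliance block, the big alliance can burn moves without decreasing $r$, and moreover that it can arrange to enter that window ``cleanly at a block boundary.'' Neither is automatic. A splitting move is not guaranteed to leave $r$ unchanged ($r$ is a shortest-path distance in the game graph, not the summand count, so you would first have to prove that splitting moves are never forced onto every shortest path), and near the end of the game there may simply be no splitting move available --- e.g.\ a position whose multiset has no repeated summand and a single adjacent pair admits only one legal move, which necessarily decreases $r$. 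Since the small alliance also chooses $d$ of every $3d$ moves, it has real influence over which positions arise in the window, so ``it can be arranged'' needs an actual strategy, and the claim that the stated hypothesis $n\ge 12d^2+4d$ makes the wasting moves available is asserted rather than derived; the quadratic constant never actually comes out of any computation in your sketch. As written, this is a program for a proof, not a proof.

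For contrast, the paper avoids pacing arguments entirely and uses a short strategy-stealing argument. The big alliance has all $2d$ of its players play $1+1=2$ for the first $d$ rounds. If in some round the $d$ small-alliance players all play $2+2=1+3$, the second half of the big alliance (a block of $d$ consecutive players) switches to $1+2=3$ in the next round; otherwise each round nets at least one extra $2$, so after $d$ rounds there are $d$ spare $2$'s, and in round $d+1$ the first half plays $1+1=2$, the second half $2+2=1+3$, with the option of the first half playing $1+2=3$ instead. In either case the substitution of $1+2=3$ for $1+1=2$ by $d$ consecutive big-alliance players shifts the game's final move back by exactly $d$ seats, so any winning strategy for the small alliance yields one whose last move lands in the big alliance --- a contradiction; by finiteness and determinacy the big alliance wins. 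The hypothesis $n\ge 12d^2+4d$ enters only to guarantee enough $1$'s for these $d+1$ rounds of prescribed moves, which is why the bound is quadratic in $d$. If you want to salvage your approach, the availability-of-wasting-moves lemma is the piece you must supply, and I would expect it to be harder than the theorem itself.
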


\section{Winning Strategy for Zeckendorf Game}

\subsection{Proof Of Theorem \ref{thm2}}

Note: In all the following proofs of this section, player $0 =$ player $p$ $($under $\bmod$ $p)$.

To prove Theorem \ref{thm2}, we introduce the following property:
\begin{property}\label{p1}
Suppose player $m$ has a winning strategy $(1\leq m \leq p)$. For any $p\geq 3$, if player $m$ is not the player who takes step $2$ listed below, then any winning path of player $m$ does not contain the following $3$ consecutive steps:

Step $1: 1+1=2$.

Step $2: 1+1=2$.

Step $3: 2+2=1+3$.
\end{property}

\begin{proof}
Suppose player $m$ has a winning strategy and there is a winning path that contains these $3$ consecutive steps. Then there exists a player $a$ where $1\leq a\leq p,\ a \neq m$, such that player $a-1 \pmod p$ can take step $1$, player $a$ can take step $2$ and player $a+1 \pmod p$ can take step $3$.

Note that instead of doing $1+1=2$, player $a$ can do $1+2=3$. Then player $m-1 \pmod p$ has a winning strategy, which is a contradiction.

Therefore, by using stealing strategy, Property \ref{p1} holds.
\end{proof}

We now prove Theorem \ref{thm2} by splitting it into the following $2$ lemmas, Lemma \ref{lem2.1.1} and Lemma \ref{lem2.1.2}.

\begin{lem}\label{lem2.1.1}
When $n \geq 13$, for any $p \geq 4$ no player has a winning strategy.
\end{lem}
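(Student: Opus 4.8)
The plan is to argue by a strategy-stealing contradiction, leveraging Property \ref{p1} together with the well-known fact (Theorem \ref{thm1}) that in the two-player game player 2 wins. Suppose some player $m$ (with $1 \le m \le p$) has a winning strategy in the $p$-player game with $p \ge 4$ and $n \ge 13$. The key idea is that with $n \ge 13$ ones on the board, the opening phase of the game has enough "slack" that several players besides $m$ must make essentially forced or nearly-forced moves of the form $1+1=2$, and by reindexing who goes first we can transfer the winning strategy to a different player, eventually reaching a parity contradiction reminiscent of the $p=2$ case.

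First I would analyze the forced structure of the early game: starting from $\{1^n\}$ with $n \ge 13$, the only available moves for the first few turns are $1+1=2$ (move rule 2a) until enough $2$'s accumulate, after which $2+2 = 1+3$ and $1+2=3$ become available. I would show that along any winning path there is an initial block of consecutive $1+1=2$ moves, long enough (using $n \ge 13$) that Property \ref{p1} applies: specifically, somewhere in this opening run we find three consecutive steps matching Step 1 ($1+1=2$), Step 2 ($1+1=2$), Step 3 ($2+2=1+3$), with the Step-2 player not equal to $m$. The bound $n \ge 13$ is what guarantees we can always locate such a triple no matter which player $m$ is and no matter how the winning path is chosen; this is the place where the numeric hypothesis is really used, and I would verify it by a short case check on how many $1+1=2$ moves must occur before any other move type can legally be played.

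Then, invoking Property \ref{p1}, the existence of such a triple on a winning path of $m$ yields a contradiction: the stealing argument in Property \ref{p1} shows that if player $m \ne a$ (the Step-2 player), then player $m-1 \pmod p$ would have a winning strategy instead, and iterating this we can shift the "winner" around the cycle of players. Combined with the observation that the set of players who win cannot be shifted freely without contradicting uniqueness of the game outcome from a fixed starting player, we conclude no player can have a winning strategy. Concretely, I would show that the shift implies every player has a winning strategy, which is absurd since the game is deterministic once all strategies are fixed and has a single winner.

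The main obstacle I anticipate is making the "opening run of $1+1=2$ moves is long enough" claim fully rigorous for all $p \ge 4$ simultaneously: one must rule out winning paths that deviate early (e.g., a player choosing $1+2=3$ as soon as a single $2$ appears) and confirm that even on such paths a qualifying Step-1/Step-2/Step-3 triple still occurs, or else handle those deviating paths by a separate stealing move. Pinning down exactly how $n \ge 13$ (rather than some smaller bound) enters—presumably to leave room for the triple to sit strictly in the interior of the game, away from the terminal Zeckendorf configuration where moves become constrained—will require the most care; the complementary small-$n$ range $5 \le n \le 12$ and the $p = 3$ case are deferred to Lemma \ref{lem2.1.2}.
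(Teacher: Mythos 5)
There is a genuine gap, and it lies in the central step of your plan. You propose to show that \emph{every} winning path of player $m$ is forced, by the structure of the opening, to contain the triple $1+1=2$, $1+1=2$, $2+2=1+3$ with the middle move made by some player $a \neq m$, and then to invoke Property \ref{p1}. But no such forcing claim is true: after the very first move nothing compels anyone to play $2+2=1+3$ at any particular moment (players may keep playing $1+1=2$, or play $1+2=3$ once a $2$ exists), so a winning path can perfectly well avoid the triple, no matter how large $n$ is. Your statement is even internally inconsistent as written: an ``initial block of consecutive $1+1=2$ moves'' cannot contain a Step~3 of the form $2+2=1+3$. The bound $n\geq 13$ does not and cannot guarantee that the triple appears on an arbitrary winning path; it only guarantees that enough $1$'s are available for certain moves to be \emph{legal}.

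The missing idea, which is how the paper argues, is that the triple does not need to be forced --- it is \emph{chosen} by the opponents. Since $p\geq 4$, there are three consecutive players none of whom is $m$: players $1,2,3$ if $m\geq 4$, and players $m+1,m+2,m+3$ (taken right after $m$'s first move) if $m\leq 3$. These three players simply elect to play $1+1=2$, $1+1=2$, $2+2=1+3$, which is legal because $n\geq 13$ leaves ample $1$'s on the board at that stage. Player $m$'s winning strategy must win against this particular line of opposing play, so the resulting play is a winning path of $m$ containing the forbidden triple with its middle move made by a player other than $m$, directly contradicting Property \ref{p1}. No iteration of the shift around the cycle, and no appeal to uniqueness of the winner, is needed; the contradiction is immediate in each of the two cases. (Also, a small bookkeeping point: the range $5\leq n\leq 12$ is settled in the paper by brute-force computation, not by Lemma \ref{lem2.1.2}, which handles $p=3$.)
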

\begin{proof}
Suppose player $m$ has a winning strategy $(1\leq m\leq p)$.

Consider the following two cases.\\

\begin{itemize}
\item[Case 1.] If $m\geq 4$, then player $1, 2, 3$ can do the following:

Player $1: 1+1=2$.

Player $2: 1+1=2$.

Player $3: 2+2=1+3$.

This contradicts Property \ref{p1}, so player $m$ does not have winning strategy for any $m\geq 4$.\\

\item[Case 2.] If $m\leq 3$, then after player $m$'s first move, player $m+1, m+2, m+3$ can do the following:

Player $m+1: 1+1=2$.

Player $m+2: 1+1=2$.

Player $m+3: 2+2=1+3$.

This contradicts Property \ref{p1}, so player $m$ does not have winning strategy for any $m\leq 3$.
\end{itemize}

By Case $1$ and Case $2$, Lemma \ref{lem2.1.1} is proved.
\end{proof}

\begin{lem}\label{lem2.1.2}
When $n \geq 13$, for $p=3$ no player has a winning strategy.
\end{lem}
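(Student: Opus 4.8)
The plan is to suppose that some player $m\in\{1,2,3\}$ has a winning strategy in the three-player game with $n\ge 13$ and to force a contradiction with Property \ref{p1}. The new difficulty compared with Lemma \ref{lem2.1.1} is that when $p=3$ the players occurring in any block of three consecutive moves are exactly $1,2,3$; one therefore cannot arrange for all of Step $1$, Step $2$, Step $3$ to be made by opponents of player $m$, and one is forced to argue about player $m$'s own moves.

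The first thing I would do is extract from Property \ref{p1} two structural restrictions on every winning path of player $m$ (indices $\bmod 3$). \emph{(R1):} player $m$ never plays $1+1=2$ from a list that still has at least two $F_1$'s afterwards, because then players $m+1$ and $m+2$ could reply with $1+1=2$ and then $2+2=1+3$ (the latter is legal since the two preceding $1+1=2$'s leave at least two $F_2$'s), producing Step $1$, Step $2$, Step $3$ consecutively with Step $2$ made by player $m+1\neq m$. \emph{(R2):} player $m$ never plays $2+2=1+3$ when the two immediately preceding moves were both $1+1=2$, since that block is Step $1$, Step $2$, Step $3$ with Step $2$ made by player $m-1\neq m$.

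Next I would dispose of $m=1$ at once: from $\{{F_1}^n\}$ the only legal move is $1+1=2$, which leaves $n-2\ge 11$ copies of $F_1$, contradicting (R1). For $m=2$: player $1$'s forced opening is $1+1=2$, so by (R1) player $2$ cannot answer $1+1=2$ (that would leave $n-4\ge 9$ ones), and player $2$ must play $1+2=3$; then players $3$ and $1$ both play $1+1=2$, so that at player $2$'s next turn the list has at least two $F_2$'s and the two preceding moves were $1+1=2$. Now (R2) forbids $2+2=1+3$ and (R1) forbids $1+1=2$, so player $2$ must play $1+2=3$ (or, once an $F_3$ is present, $F_2+F_3=F_4$). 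I would then iterate: the opponents rebuild two fresh $F_2$'s with two more $1+1=2$ moves and force player $2$ into the identical squeeze. The case $m=3$ runs the same way, the squeeze now starting at move $3$ after players $1$ and $2$ open with $1+1=2,\,1+1=2$.

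The step I expect to be the real obstacle is closing this loop, i.e.\ the endgame. Each round of the squeeze consumes four $F_1$'s for the opponents plus at most one more for player $m$, so after $O(n)$ rounds the $F_1$'s are too few to supply two consecutive $1+1=2$'s, while player $m$'s accumulated $1+2=3$ moves have produced $F_3$'s (and possibly larger Fibonacci numbers) which give player $m$ an escape from the squeeze position via $F_2+F_3=F_4$. To finish one must show the opponents can still corner player $m$ from these $F_1$-poor, $F_3$-polluted positions; the natural tools are the large surplus of $F_2$'s (and $F_3$'s) the board has accumulated together with a further strategy-stealing argument in the spirit of Property \ref{p1} applied to blocks ending in a move of the form $F_i+F_i=F_{i-2}+F_{i+1}$, or, failing a clean argument, a direct finite check of the remaining bounded game tree. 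The bound $n\ge 13$ is exactly what guarantees enough $F_1$'s to carry the squeeze far enough to reach such a cornering position, and the finitely many remaining values $5\le n\le 12$ needed for Theorem \ref{thm2} would be verified separately.
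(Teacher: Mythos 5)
There is a genuine gap. Your restrictions (R1) and (R2) are correct consequences of Property \ref{p1}, and your quick dismissal of $m=1$ is fine, but the argument then only \emph{constrains} player $m$'s moves (forcing $1+2=3$ or $F_2+F_3=F_4$ in the squeeze positions); it never converts those constraints into a contradiction with ``player $m$ wins.'' You acknowledge this yourself: the endgame, where the $F_1$'s run low and $F_3$'s accumulate, is left open, and the suggested fallback of ``a direct finite check of the remaining bounded game tree'' does not close it, since the lemma is for all $n\geq 13$ and the residual positions reached by the squeeze grow with $n$. As it stands the proposal proves that any winning strategy for $m$ must avoid certain moves, not that no winning strategy exists.

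The idea you are missing, which is how the paper finishes in five moves, is a \emph{second} application of strategy stealing carried out by the opponents on their own moves, with no need to force or analyze player $m$'s replies at all. After player $m$'s first move, players $m+1$ and $m+2$ play $1+1=2$, $1+1=2$; player $m$ then moves arbitrarily. If that move is $2+2=1+3$, Property \ref{p1} is violated directly. Otherwise player $m$'s move removes at most one $2$, so at least one of the two freshly created $2$'s survives; the opponents then play $1+1=2$ (player $m+1$) followed by $2+2=1+3$ (player $m+2$). Since this pair of moves is equivalent to the single move $1+2=3$, player $m+1$ could instead play $1+2=3$, reaching the same state one move sooner and shifting the last mover of the winning path back by one player, i.e., to player $m-1\not\equiv m \pmod 3$ --- contradicting the assumption that $m$ has a winning strategy. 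This removes any need for your iterated squeeze or an endgame analysis; the only quantitative input is that $n\geq 13$ leaves enough $1$'s for these few moves.
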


\begin{proof}
Suppose player $m$ has a winning strategy $(1\leq m\leq 3)$.

After player $m$'s first move, players $m+1$ and $m+2$ can do the following $($if $m=3$, we can start the following process from the first step of the game$)$:

Player $m+1: 1+1=2$ (Step $1$).

Player $m+2: 1+1=2$ (Step $2$).

Player $m$: player $m$ can do anything (Step $3$).\\


Note that if player $m$ does $2+2=1+3$, then Steps $1, 2$, and $3$ violate Property \ref{p1}, which is a contradiction.

 If player $m$ does anything else other than $2+2=1+3$, then after player $m$'s first move, the other $2$ players can do the following (continuing after the first $3$ steps listed above with $2$ more steps; if $m=3$, player $m+1$ is player $1$):

Player $m+1: 1+1=2$ (Step $1$).

Player $m+2: 1+1=2$ (Step $2$).

Player $m$: player $m$ can do anything (Step $3$).

Player $m+1: 1+1=2$ (Step $4$).

Player $m+2: 2+2=1+3$ (Step $5$).

Note that Step $3$ removes at most one $2$, but Step $1$ and Step $2$ generate two $2$'s in total, so there will be at least one $2$ remaining after step $3$. Therefore, player $m+1$ can do $1+2=3$ instead in Step $4$. By doing so, now player $m-1\pmod p$ has winning strategy, which is a contradiction.

Thus by using a stealing strategy, Lemma \ref{lem2.1.2} is proved.
\end{proof}

By Lemmas \ref{lem2.1.1} and \ref{lem2.1.2}, and brute force computations for $n<13$, Theorem \ref{thm2} is proved.

\subsection{Proof Of Theorem \ref{thm3}}

For the following proofs, team $0 =$ team $t$ $($under $\bmod\ t)$.

Note that player $tk$'s next player is player $1$, and we regard player $tk$ and player $1$ as two consecutive players. Therefore, without loss of generality, in all the following proofs, we assume that team $1$ has player $1, 2, 3,\dots, k;$ team $2$ has player $k+1, k+2,\dots, 2k;$ team $3$ has player $2k+1, 2k+2,\dots, 3k$ and so on.

For this proof we utilize the following property.

\begin{property}\label{p2}
Suppose team $m$ has a winning strategy $(1\leq m \leq t)$. For any $t\geq 3$ and $k=t-1$, if none of the middle $k$ players listed below belong to team $m$, then any winning path for team $m$ does not contain the following $3k$ consecutive steps:

First $k$ players all do: $1+1=2$.

Middle $k$ players all do: $1+1=2$.

Last $k$ players all do: $2+2=1+3$.
\end{property}

\begin{proof}
Suppose team $m$ has a winning strategy and there is a winning path for team $m$ that contains such $3k$ consecutive steps. Then $\exists\ q$ $(1\leq q\leq p)$ such that player $q$ belongs to team $m$ and takes the last step of the game.

For the middle $k$ players, instead of doing $1+1=2$, they can all do $1+2=3$.

By doing so, player $q-k$ now becomes the player who takes the last step.

Note that team $m$ has $k$ players, so player $q-k$ belongs to team $m-1 \pmod t$.

So team $m-1 ($ mod $t)$ has winning strategy, which contradicts with our assumption.

Therefore, by using stealing strategy, it is proved that Property \ref{p2} holds.
\end{proof}

After proving Property \ref{p2}, we prove Theorem \ref{thm3} by splitting it into the following $2$ lemmas: Lemmas \ref{lem3} and \ref{lem4}.

\begin{lem}\label{lem3}
When $n\geq2k^2+4k$, for any $t \geq 4$ and $k=t-1$ no team has winning strategy.
\end{lem}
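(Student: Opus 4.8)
\textbf{Proof proposal for Lemma \ref{lem3}.}

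The plan is to mimic the structure of the proof of Lemma \ref{lem2.1.1}: assume some team $m$ has a winning strategy, and then exhibit, early in the game, a block of $3k$ consecutive moves of exactly the form forbidden by Property \ref{p2}, arranged so that no player of team $m$ occupies any of the ``middle $k$'' slots. Since Property \ref{p2} rules out such a winning path, this yields a contradiction. The size hypothesis $n \ge 2k^2 + 4k$ is needed precisely so that this block of moves is legal: we must have enough $1$'s available for all $3k$ of the prescribed combinations, and the list must still not be the Zeckendorf decomposition before the block finishes (so the game does not terminate prematurely).

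First I would split into cases according to where team $m$ sits among the $t$ teams, exactly as in Lemma \ref{lem2.1.1}. If team $m$ is not team $2$ (so that teams $1$ and $3$, which will supply the ``first $k$'' and ``last $k$'' players, can be team $m$ but team $2$ — the middle block — is not), then we start the forbidden block from the very first move: players $1,\dots,k$ (team $1$) each do $1+1=2$; players $k+1,\dots,2k$ (team $2$) each do $1+1=2$; players $2k+1,\dots,3k$ (team $3$) each do $2+2=1+3$. Since team $m \ne$ team $2$, none of the middle $k$ players is on team $m$, and Property \ref{p2} is contradicted. If team $m$ \emph{is} team $2$, I would instead let team $m$ take its entire first round of $k$ moves (whatever strategy dictates), and then run the same $3k$-block starting with team $3$ as the ``first $k$'', team $4$ as the ``middle $k$'', and team $5$ as the ``last $k$'' (indices mod $t$); since $t \ge 4$ we can always choose three consecutive teams among which team $m$ is the first, so again the middle $k$ players avoid team $m$. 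Either way Property \ref{p2} is violated.

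The main obstacle — and the only place real work is needed — is verifying that the prescribed moves are \emph{legal} under the bound $n \ge 2k^2+4k$, i.e.\ that at each step the required pair of tokens is present and that the game has not already ended. In the worst case (team $m =$ team $2$) we first burn up to $k$ of team $m$'s moves, which can consume at most $\lfloor \cdot \rfloor$ pairs of $1$'s, i.e.\ reduce the pool of $1$'s by at most $k$ (each $1+1=2$ uses two $1$'s but there are at most $k$ such moves, and other moves can only use $1$'s in making a larger Fibonacci number, at worst $2$ per move); then the first $k$ moves of the block need $2k$ ones, the middle $k$ moves need $2k$ ones, and the last $k$ moves need two $2$'s each — and the ones produced in the first two blocks ($2k$ copies of $2$, minus whatever the team-$m$ round touched) must suffice to feed the $k$ moves of $2+2=1+3$. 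Tracking these token counts and confirming that $n \ge 2k^2+4k$ forces a surplus at every stage (so nothing runs out and the Zeckendorf decomposition is not reached mid-block) is the crux; I expect the quadratic term $2k^2$ to come from needing roughly $2k$ ones for each of order $k$ many moves, plus the $4k$ slack to absorb team $m$'s preliminary round and to keep the list non-terminal. Once legality is established the contradiction with Property \ref{p2} is immediate, completing the proof.
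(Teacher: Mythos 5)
Your overall plan (exhibit a block of $3k$ consecutive moves of the shape forbidden by Property \ref{p2}, with the middle $k$ movers not on team $m$) is indeed the paper's strategy, but your Case 1 contains a genuine gap. When team $m$ is team $1$ or team $3$, you prescribe the moves of the block's first $k$ (respectively last $k$) players, i.e., of team $m$ itself. That is not legitimate: team $m$'s moves are dictated by its hypothesized winning strategy, and Property \ref{p2} only forbids the pattern from occurring in a \emph{winning path} of team $m$, meaning a play in which team $m$ actually follows that strategy. If team $m$'s strategy does not play $1+1=2$ on each of its first $k$ moves (or $2+2=1+3$ in the last segment), the play you write down is not a winning path, Property \ref{p2} says nothing about it, and no contradiction results. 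Only the opponents' moves may be chosen freely. The paper avoids this entirely: since $t\ge 4$, the $(t-1)k=k^{2}\ge 3k$ players who move between two consecutive rounds of team $m$ all belong to other teams, so the block (first $k$ do $1+1=2$, middle $k$ do $1+1=2$, last $k$ do $2+2=1+3$) is started at player $mk+1$, immediately after team $m$'s first-round moves (or at player $1$ when $m=t$), with every one of the $3k$ movers an opponent. Your own Case 2 is essentially this correct construction; it should be used for every $m$, not only when team $m$ sits in the middle slot.

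A secondary weakness is that you flag the legality verification under $n\ge 2k^{2}+4k$ as ``the crux'' but do not carry it out, and your account of where the bound comes from does not match your construction: in your cases at most $2k$ players move before the block, so on the order of $8k$ ones would suffice. The quadratic term is needed for the construction that actually works for all $m$: in the worst case up to $mk\le k^{2}$ players (the teams preceding team $m$ plus team $m$ itself) move before the block begins, each such move consuming at most two $1$'s, and the block's first $2k$ moves consume $4k$ more, giving the threshold $2k^{2}+4k$; the final $k$ moves $2+2=1+3$ are fed by the $2k$ twos created inside the block, which nothing in the (consecutive) block removes. With the block repositioned as above and this count made explicit, the argument closes; as written, the cases $m=1$ and $m=3$ of your Case 1 do not.
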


\begin{proof}
Suppose team $m$ has a winning strategy $(1\leq m\leq t)$.

Note that the last player in team $m$ is player $mk$, so the first player after team $m$ is player $mk+1 \pmod p$.

Also, since there are $t-1=k$ other teams, and each team has $k$ players, where $k\geq 4-1=3$. Therefore, there are $k^2\geq 3k $ consecutive players from other teams. After all the members of team $m$'s first move, the consecutive $t-1=k$ other teams can do the following:

$($If $m=t$, we start the following steps from the first step of player $1.)$ In all the following, given players' numbers are mod $p$.

From player $mk+1$ to $(m+1)k$ all do: $1+1=2$.

From player $(m+1)k+1$ to player $(m+2)k$ all do: $1+1=2$.

From player $(m+2)k+1$ to player $(m+3)k$ all do: $2+2=1+3$.

Since all these $3k$ players are not from team $m$, it contradicts with property \ref{p2}, so team $m$ does not have winning strategy. Therefore, Lemma \ref{lem3} is proved.
\end{proof}

\begin{lem}\label{lem4}
When $n\geq 30$, for any $t=3$ and $k=2$ no team has winning strategy.
\end{lem}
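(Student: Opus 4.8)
The plan is to mirror the proof of Lemma \ref{lem2.1.2}, replacing Property \ref{p1} by Property \ref{p2} (with $t=3$, $k=2$) and ``doubling'' every move in the strategy‑stealing step. Suppose for contradiction that team $m$ ($1\le m\le 3$) has a winning strategy $S$. I would arrange a prefix of a winning path so that, when it is team $m$'s turn, the two opposing teams have just played, in order, the four moves ``team $m+1$'s two players each play $1+1=2$, then team $m+2$'s two players each play $1+1=2$'', and the current list consists of many $1$'s together with a controlled number of $2$'s. For $m\ne t$ this prefix begins with team $m$'s opening pair of moves; for $m=t=3$ the process is started from the very first move of the game, since team $m$ moves last in the cycle and cannot ``open''. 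In all cases one first runs a short warm‑up in which the two opposing teams keep playing $1+1=2$ while team $m$ plays per $S$ — if team $m$ ever plays one of the two special pairs described below during the warm‑up we are already done, and otherwise each warm‑up round nets a surplus of at least one extra $2$, so a surplus can be accumulated before the decisive turn.

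Next I would split on what $S$ tells team $m$'s two players to do on that turn, paralleling Cases 1 and 2 of Lemma \ref{lem2.1.2}. If both players play $2+2=1+3$, then the six moves ``$1+1=2$, $1+1=2$ by team $m+1$; $1+1=2$, $1+1=2$ by team $m+2$; $2+2=1+3$, $2+2=1+3$ by team $m$'' occur consecutively on a winning path for team $m$, and since the middle two players lie on team $m+2\ne$ team $m$, this contradicts Property \ref{p2}. If instead both players play $1+1=2$, I would let the opponents continue with ``$1+1=2$, $1+1=2$ by team $m+1$; $2+2=1+3$, $2+2=1+3$ by team $m+2$''; now team $m$'s two $1+1=2$ moves are the first two players, team $m+1$ is the (off‑team) middle pair, and team $m+2$ is the last pair, again contradicting Property \ref{p2}. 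In the remaining case team $m$'s two moves are neither both $2+2=1+3$ nor both $1+1=2$; since each move removes at most one $2$ unless it is $2+2=1+3$ (which removes two), the pair removes at most three $2$'s, so by the warm‑up and $n\ge 30$ enough $2$'s remain. I would then have the opponents play the ``standard'' continuation ``$1+1=2$, $1+1=2$ by team $m+1$; $2+2=1+3$, $2+2=1+3$ by team $m+2$'', which lies on team $m$'s winning path, and perform the steal: team $m+1$'s two players each play $1+2=3$ instead of $1+1=2$. Because the net effect of ``$1+1=2$, $1+1=2$ then $2+2=1+3$, $2+2=1+3$'' on the list equals that of ``$1+2=3$, $1+2=3$'', the stolen line reaches the very same list two moves sooner; hence the player who makes the final move of the game shifts back by two positions, i.e.\ by one full team (teams are $k=2$ consecutive players), so team $m-1$ (equivalently team $m+2$ modulo $3$) can copy $S$ and win — contradicting that team $m$ has a winning strategy. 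This proves the lemma.

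The step I expect to be the main obstacle is the bookkeeping of the number of $2$'s, so that every $2+2=1+3$ and every $1+2=3$ in the continuation is legal: the naive count ``four $2$'s created by the opponents minus three possibly destroyed by team $m$'' leaves only one $2$, which is not enough to run two consecutive $1+2=3$ moves (and not enough for the standard line's second $2+2=1+3$). This is exactly what the warm‑up of extra $1+1=2$ moves — and the hypothesis $n\ge 30$ — are there to repair, and carrying out this surplus cleanly for all three residues of $m$, and in particular for $m=t$ where team $m$ moves last, is the delicate part of the argument.
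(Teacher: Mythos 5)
Your proposal is correct and follows essentially the same route as the paper's proof: force rounds in which the four opposing players play $1+1=2$, use Property \ref{p2} to rule out team $m$ answering with a double $2+2=1+3$, count the surplus of $2$'s, and then steal by having team $m+1$ play $1+2=3$ twice so the final move shifts back by one team to team $m-1$. The only differences are organizational — the paper scripts exactly two fixed rounds (its steps $1$--$12$) and gets the needed two leftover $2$'s from the count ``eight created, at most six removed,'' whereas you accumulate the surplus with a warm-up round and treat the ``both $1+1=2$'' response as a separate Property \ref{p2} window — which does not change the substance of the argument.
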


\begin{proof}
Suppose team $m$ has a winning strategy $(1\leq m\leq 3)$. Note that the game has $3$ teams and $6$ players in total, so all the players' numbers listed below are under mod $6$, and all the teams' numbers listed below are under mod $3$. Team $m+1$ has players $2m+1$ and $2m+2$; team $m-1$ has players $2m+3$ and $2m+4$; team $m$ has players $2m-1$ and $2m$.

After player $2m$'s (last player from team $m$) first move, let's do the following first:

$($if $m=3$, the same following process can start from the first step of player $1.)$

Player $2m+1: 1+1=2$ $($Step $1)$.

Player $2m+2: 1+1=2$ $($Step $2)$.

Player $2m+3: 1+1=2$ $($Step $3)$.

Player $2m+4: 1+1=2$ $($Step $4)$.

Player $2m-1:$ anything $($Step $5)$.

Player $2m:$ anything $($Step $6)$.

Player $2m+1: 1+1=2$ $($Step $7)$.

Player $2m+2: 1+1=2$ $($Step $8)$.

Player $2m+3: 1+1=2$ $($Step $9)$.

Player $2m+4: 1+1=2$ $($Step $10)$.

Player $2m-1:$ anything $($Step $11)$.

Player $2m:$ anything $($Step $12)$.

$($Note: step $5, 6, 11, 12$ can be anything because they are controlled by team $m.)$

Now we prove this lemma in $2$ cases.\\
\begin{itemize}
    \item[Case 1.] If step $5,6$ are both $2+2=1+3$, then look at steps $1,2,3,4,5,6.$

This contradicts with property \ref{p2}, so team $m$ has no winning strategy.

Similarly, if step $11,12$ are both $2+2=1+3$, then look at steps $7,8,9,10,11,12.$

This contradicts with property \ref{p2}, so team $m$ has no winning strategy.\\

\item[Case 2.] Otherwise, if one of the steps from $5,6$ is not $2+2=1+3$, and one of the steps from $11,12$ is not $2+2=1+3$, then let's do the following after player $2m$'s first move $($continuing after first $12$ steps with $4$ more steps; if $m=3$, the same following process can start from the very first step of player $1)$:

Player $2m+1: 1+1=2$ $($Step $1)$.

Player $2m+2: 1+1=2$ $($Step $2)$.

Player $2m+3: 1+1=2$ $($Step $3)$.

Player $2m+4: 1+1=2$ $($Step $4)$.

Player $2m-1:$ anything $($Step $5)$.

Player $2m:$ anything $($Step $6)$.

Player $2m+1: 1+1=2$ $($Step $7)$.

Player $2m+2: 1+1=2$ $($Step $8)$.

Player $2m+3: 1+1=2$ $($Step $9)$.

Player $2m+4: 1+1=2$ $($Step $10)$.

Player $2m-1:$ anything $($Step $11)$.

Player $2m:$ anything $($Step $12)$.

Player $2m+1: 1+1=2$ $($Step $13)$.

Player $2m+2: 1+1=2$ $($Step $14)$.

Player $2m+3: 2+2=1+3$ $($Step $15)$.

Player $2m+4: 2+2=1+3$ $($Step $16)$.

Note that one of the steps from $5,6$ is not $2+2=1+3$, and one of the steps from $11,12$ is not $2+2=1+3$, so steps $5,6$ will take away at most three ``$2$'s'' in total, and steps $11,12$ will take away at most three ``$2$'s'' in total. Also note that steps $1,2,3,4,5,6,7,8,9,10$ generate eight ``$2$'s'' in total. So after step $12$, there will be at least two $2$'s remaining.

Therefore, for player $2m+1$ and $2m+2$, instead of doing $1+1=2$, they can both do $1+2=3$.

Since team $m$ has winning strategy, either player $2m-1$ or player $2m$ takes the last step.

If player $2m-1$ originally takes the last step, by using the stealing strategy mentioned above, now player $2m-1-2=2m-3$ now takes the last step, which means that player $2m+3$ now takes the last step, so team $m-1$ now has the winning strategy, which contradicts with our assumption.

If player $2m$ originally takes the last step, by using the stealing strategy mentioned above, now player $2m-2$ now takes the last step, which means that player $2m+4$ now takes the last step, so team $m-1$ now has the winning strategy, which contradicts with our assumption.
\end{itemize}

In both cases, we can find contradiction by using stealing strategy, so Lemma \ref{lem4} is proved.
\end{proof}

Therefore, by Lemmas \ref{lem3} and \ref{lem4}, Theorem \ref{thm3} is proved.

\subsection{Proof Of Theorem \ref{thm4}}
Note: in the following proof, since player $6$'s next player is player $1$, player $6$ and player $1$ are considered as two consecutive players.

The $4$-player alliance has $3$ possible cases.\\

\begin{itemize}

\item[Case $1$.] If the $4$-player alliance consists of $4$ consecutive players, then the $2$-player alliance will also be $2$ consecutive players.

To show that the $2$-player alliance does not actually have a winning strategy, the $4$ consecutive players in the big alliance can be regarded as $2$ teams, where each team has $2$ consecutive players.

Therefore, according to Lemma \ref{lem4}, the $2$-player alliance does not have a winning strategy.

Therefore, the $4$-player alliance always has a winning strategy in this case.\\

\item[Case $2$.] If the $4$-player alliance is separated in two parts, and each part has $2$ consecutive players.

Note that this situation is equivalent to the $3$-player game situation, where $2$ of them are in the same team now.

According to Lemma \ref{lem2.1.2}, the single player in the $3$-player game does not have a winning strategy.

Equivalently, the $2$-player alliance does not have a winning strategy in this case.

Therefore, the $4$-player alliance always has a winning strategy in this case.\\

\item[Case $3$.] If the $4$-player alliance is separated in two parts, where one part has $3$ consecutive players and the other part only has $1$ player.

Then, the $2$ players in the $2$-player alliance are separated from each other (if they are not separated, then the $4$ players of the $4$-player alliance will be $4$ consecutive players).

Suppose the $2$-player alliance has a winning strategy, then there always exists a player $q$ from the $2$-player alliance who takes the last step.

Suppose the $3$ consecutive players in the $4$ player alliance are player $a$, $a+1\pmod 6$, $a+2 \pmod 6$. Then let's do the following from player $a$'s first move:

Player $a: 1+1=2$.

Player $a+1: 1+1=2$.

Player $a+2: 2+2=1+3$.

Note that if player $a+1$ does $1+2=3$ instead, then player $q-1$ will now be the player who takes the last step. Since 2 players of the $2$-player alliance are separated, player $q-1$ belongs to the $4$-player alliance. Therefore, the $4$-player alliance now has the winning strategy, which contradicts with our assumption.

Therefore, by using stealing strategy, we proved that the $4$-player alliance has a winning strategy in this case.

\end{itemize}

Thus by Case $1$, Case $2$ and Case $3$, Theorem \ref{thm4} follows.

\subsection{Proof Of Theorem \ref{thm5}}

We look at the case of $7$-player game first.

\begin{lem}\label{lem5}
When $n\geq 32$, if one alliance has $5$ players and the other alliance has 2 players, then the $5$-player alliance always has a winning strategy.
\end{lem}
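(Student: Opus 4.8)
The plan is to mimic the structure of the proof of Theorem \ref{thm4}, Case 3, since the $7$-player game with a $5$-player alliance and a $2$-player alliance is the base case out of which the general $p \geq 7$ statement of Theorem \ref{thm5} will be built (presumably by an induction argument that adjoins extra players to the big alliance, much as Theorem \ref{thm4} reduced to Lemma \ref{lem4} and Lemma \ref{lem2.1.2}). First I would split into cases according to how the $2$ players of the small alliance sit among the $7$ players. If the two small-alliance players are \emph{adjacent}, then the $5$ big-alliance players are $5$ consecutive players, which we can partition into two teams, or otherwise appeal to the earlier multi-team non-winning results (Lemma \ref{lem2.1.2}/Lemma \ref{lem4}) to conclude the small alliance has no winning strategy, hence the big alliance does. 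The genuinely new case is when the two small-alliance players are \emph{separated}, so that there is a run of at least $3$ consecutive big-alliance players somewhere in the cycle.

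In that separated case I would run the stealing argument exactly as in Theorem \ref{thm4}, Case 3. Suppose for contradiction the small alliance has a winning strategy, so along every play consistent with that strategy some small-alliance player $q$ makes the final move. Let $a, a+1, a+2 \pmod 7$ be three consecutive big-alliance players. Starting from player $a$'s move, have player $a$ do $1+1=2$, player $a+1$ do $1+1=2$, and player $a+2$ do $2+2=1+3$ (this requires $n$ large enough that these $1$'s and $2$'s are available, which is exactly why the hypothesis $n \geq 32$ is imposed; one checks that after the opening moves there are plenty of $1$'s, and the two $2$'s created in the first two steps feed the third). Now modify: player $a+1$ instead plays $1+2=3$, consuming the single $2$ that player $a$ just created together with one of the ambient $1$'s. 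This shortens the game by one move (one fewer Fibonacci number is "wasted" into a $2$ that then has to be recombined), so the player who now makes the final move is player $q-1$. Since the two small-alliance players are separated, player $q-1$ belongs to the big alliance, giving the big alliance a winning strategy — contradiction. Hence the small alliance has no winning strategy in this case either, and since the game is finite and has no draws, the big alliance wins.

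I would then close the $7$-player case by noting the only remaining configurations of the two small-alliance players (up to the cyclic symmetry identifying player $7$'s successor with player $1$) are "adjacent" and "separated," both handled above, plus a check that $n \geq 32$ suffices uniformly; brute-force or direct verification handles any small cases that slip through the availability argument. For the general $p \geq 7$ statement of Theorem \ref{thm5}, I would argue that adding players to the big alliance only helps it: any extra big-alliance player can, on their turn, simply make an arbitrary legal move, and the stealing modification above (replacing one $1+1=2$ by $1+2=3$ among three consecutive big-alliance players) still transfers the final move from a small-alliance player $q$ to player $q-1$, who is still in the big alliance because the small alliance still has only $2$ players and — in the worst configuration — they are separated by big-alliance players on both sides; if they are adjacent, the big alliance contains a block of $p-2 \geq 5$ consecutive players and one again reduces to the multi-team results.

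The step I expect to be the main obstacle is the resource-availability bookkeeping: verifying that whenever the small alliance claims to have a winning strategy, the specific sequence of moves $1+1=2$, $1+1=2$, $2+2=1+3$ (or its stolen variant $1+2=3$) is actually \emph{legal} at the point we want to insert it — i.e., that enough $1$'s and the right $2$'s are present along the opponent's winning play. This is where the precise threshold ($n \geq 32$, and $n \geq 30$ in Lemma \ref{lem4}) comes from, and it is the part that must be argued carefully rather than waved through, possibly by showing that early in any play one can force the board into a state with many $1$'s before the stealing maneuver is applied, together with a finite brute-force check for the boundary values of $n$.
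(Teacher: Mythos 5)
Your handling of the \emph{separated} case is exactly the paper's Case 1: pick three consecutive big-alliance players $a, a+1, a+2$, play $1+1=2$, $1+1=2$, $2+2=1+3$, and note that $a+1$ switching to $1+2=3$ shortens the game by one move, so the last mover shifts from $q$ to $q-1$, who must be in the big alliance because the two small-alliance players are not adjacent. That part is fine.

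The genuine gap is in the \emph{adjacent} case. You propose to dispose of it by partitioning the $5$ consecutive big-alliance players into ``teams'' and appealing to Lemma \ref{lem2.1.2} or Lemma \ref{lem4}, but no such reduction exists for $7$ players: Lemma \ref{lem2.1.2} is the $3$-player game, Lemma \ref{lem4} is the $6$-player game with three teams of two consecutive players, and Theorem \ref{thm3} needs $t$ teams of exactly $k=t-1$ players (so $t(t-1)$ players in total); a $7$-player game with blocks of sizes $5$ and $2$ matches none of these, and $5$ consecutive players cannot be split to recreate any of those configurations. Note also why this case cannot be waved through with the single steal you use elsewhere: if the two small-alliance players are adjacent, shifting the final move from $q$ to $q-1$ may simply hand the win to the \emph{other} small-alliance player. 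The paper's Case 2 resolves this with a double steal: the five consecutive big-alliance players play $1+1=2$ five times, the two small-alliance players move arbitrarily, then the big alliance plays $1+1=2$ three more times followed by two moves $2+2=1+3$; since the two small-alliance moves can consume at most four $2$'s while six have been created by step $8$, at least two $2$'s remain, so \emph{two} big-alliance players (steps $9$ and $10$) can each substitute $1+2=3$, shortening the game by two moves and shifting the last mover from $q$ to $q-2$, which is guaranteed to lie in the big alliance precisely because the small alliance is a consecutive pair. This resource count (and the resulting shift by $2$ rather than $1$) is the missing ingredient in your proposal, and it is also where the hypothesis $n\geq 32$ actually gets used.
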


\begin{proof}
We prove this lemma by considering $2$ cases.\\

\begin{itemize}

\item[Case $1$.] If the $2$ players in the $2$-player alliance are not $2$ consecutive players, then the $5$-player alliance will be separated by $2$ parts (considering player $7$ and player $1$ as two consecutive players).

According to the pigeonhole principle, one of these $2$ parts will contain at least $3$ consecutive players (we call this part ``large part'').

Suppose the $2$-player alliance has a winning strategy, then there exists a player $q$ from the $2$-player alliance who takes the last step.

Suppose the first player in the large part is player $a$, then starting from player $a$'s first move, let's do the following:

Player $a: 1+1=2$.

Player $a+1: 1+1=2$.

Player $a+2: 2+2=1+3$.

Note that instead of doing $1+1=2$, player $a+1$ can do $1+2=3$. Then player $q-1$ is the player who takes the last step of the winning path.

Since the $2$ players in the $2$-player alliance are not consecutive, player $q-1$ belongs to the $5$-player alliance.

As a result, the $5$-player alliance now has the winning strategy, which contradicts our assuption.

Therefore, by using the stealing strategy, we proved that the $5$-player alliance has the winning strategy in this case.\\

\item[Case $2$.] If the $2$ players in the $2$-player alliance are consecutive, then the $5$ players in the $5$-player are also consecutive.

Suppose that the $5$ consecutive players of the $5$-player alliance starts with player $a$.

Then starting with player $a$'s first move, let's do the following:

Player $a: 1+1=2$ $($Step $1)$.

Player $a+1: 1+1=2$ $($Step $2)$.

Player $a+2: 1+1=2$ $($Step $3)$.

Player $a+3: 1+1=2$ $($Step $4)$.

Player $a+4: 1+1=2$ $($Step $5)$.

Player $a+5:$ anything $($Step $6)$.

Player $a+6:$ anything $($Step $7)$.

(Note that player $a+5$ and player $a+6$ are in the $2$-player alliance.)

Player $a: 1+1=2$ $($Step $8)$.

Player $a+1: 1+1=2$ $($Step $9)$.

Player $a+2: 1+1=2$ $($Step $10)$.

Player $a+3: 2+2=1+3$ $($Step $11)$.

Player $a+4: 2+2=1+3$ $($Step $12)$.

Suppose the $2$-player alliance has a winning strategy, then there always exists a player $q$ from the $2$-player alliance who takes the last step.

Note that step $6$ and step $7$ can take away at most four $2$'s in total, and steps $1, 2, 3, 4, 5, 8$ have generated six $2$'s in total.

As a result, after step $8$, there will be at least two $2$'s remaining.

Therefore, player $a+1$ in step $9$ and player $a+2$ in step $10$ can both do $1+2=3$ instead. Then player $q-2$ becomes the player who takes the last step.

Since $2$ players of the 2-player alliance are consecutive, player $q-2$ belongs to the $5$-player alliance. Therefore, the $5$-player alliance now has the winning strategy, which contradicts with our assumption.

Therefore, by using stealing strategy, Case $2$ is proved.
\end{itemize}
Thus by our analysis in Case $1$ and Case $2$, Lemma \ref{lem5} is proved.
\end{proof}

Now let's look at the game of $8$ or more players.

\begin{lem}\label{lem6}
In a $p$-player game with $2$ alliances, when $n$ is significantly large $(n\geq 22)$ and $p\geq 8$, if one alliance has $p-2$ players (let's call it big alliance, which has at least $6$ players), and the other alliance has $2$ players, then the big alliance always has a winning strategy.
\end{lem}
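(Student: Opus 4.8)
The plan is to run the proof of Lemma~\ref{lem5} essentially unchanged, exploiting that for $p\ge 8$ the big alliance always contains a block of at least $p-2\ge 6$ consecutive players rather than exactly $5$, which only gives more room to work. Since the game is finite and exactly one alliance produces the Zeckendorf decomposition, exactly one alliance has a winning strategy; suppose for contradiction the $2$-player (small) alliance does, fix a play in which it follows such a strategy, and let $q$ be the small player who makes the final move. Write the small alliance as $\{s,s'\}$ and split into two cases depending on whether $s$ and $s'$ are cyclically adjacent.

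\emph{Non-adjacent case.} Then $s$ and $s'$ block at most two of the opening turns, so some block $a,a+1,a+2$ of three consecutive big players takes its turns among the first several moves of the game, with many $1$'s still present. Prescribe for the big alliance the moves $a\colon 1+1=2$, $a+1\colon 1+1=2$, $a+2\colon 2+2=1+3$. Because $[1+1=2][1+1=2][2+2=1+3]$ and $[1+1=2][1+2=3]$ have the same net effect on the list, having player $a+1$ do $1+2=3$ instead (legal: a $2$ was just created and $1$'s remain) reaches, after $a+1$'s move, exactly the list that the original play had after $a+2$'s move. So the remainder of the small alliance's winning play replays but shifted one player earlier, making the final move belong to player $q-1$, which is a big player since $s,s'$ are non-adjacent --- contradicting that the small alliance wins. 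This is the strategy-stealing step of Lemma~\ref{lem5}, now phrased as the big alliance switching to the shifted strategy.

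\emph{Adjacent case.} Say the small players are $s$ and $s+1$. The big alliance then has a block of $p-2\ge 6$ consecutive turns, and the turns of $s$ and $s+1$ are the only turns separating it from the next such block. Prescribe: the six big players whose turns immediately precede $s$'s turn play $1+1=2$, creating six $2$'s; after $s$ and $s+1$ move --- each such move deletes at most two $2$'s, since only $2+2=1+3$ deletes $2$'s and it deletes two --- at least two $2$'s survive; then four more consecutive big players play $1+1=2,\ 1+1=2,\ 2+2=1+3,\ 2+2=1+3$. The steal now replaces the first two of these four moves by $1+2=3,\ 1+2=3$ (legal, as two spare $2$'s remain); since $[1+1=2][1+1=2][2+2=1+3][2+2=1+3]$ and $[1+2=3][1+2=3]$ agree, the winning play replays shifted by \emph{two} players, so the final move belongs to player $q-2$, a big player because the only small players are $s$ and $s+1$ and $p\ge 8$. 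Shifting by two is essential here: shifting by one could land on $s$, which is still small.

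The algebra is routine; the delicate part, and where $n\ge 22$ enters, is verifying that all the prescribed sequences are legal --- enough $1$'s present for each $1+1=2$ and $1+2=3$, and at least two $2$'s surviving the small players' interference in the adjacent case --- and that they can be scheduled early enough in the game. I expect this bookkeeping to be the main hurdle, particularly coordinating the position of the block of six $1+1=2$ moves with the (possibly early) turns of the small players; a separate and easier argument will likely also be needed in the regime where $p$ exceeds the length of the game, in which not every player gets a turn and one instead argues that the big alliance can steer the game length so that the final move does not fall on a small player.
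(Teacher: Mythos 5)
Your overall strategy-stealing scheme is the same as the paper's, and your non-adjacent case is essentially identical to the paper's Case 1 (shift by one via $1+2=3$, landing on $q-1$, which must be big). Where you diverge is the adjacent case: you import the machinery of Lemma \ref{lem5} (build a stock of six $2$'s with the six big players before $s$, let $s,s+1$ destroy at most four, then run a four-move tail and steal by two). The paper exploits the extra room that $p\geq 8$ gives more directly: since the big alliance now has at least $6$ \emph{consecutive} players $a,\dots,a+5$, it prescribes $1+1=2$ for $a,a+1,a+2,a+3$ and $2+2=1+3$ for $a+4,a+5$, all inside the big block with no small-alliance turn in between, and steals by having $a+2,a+3$ play $1+2=3$; this needs no ``surviving $2$'s'' count at all and only about $12$ ones, comfortably within $n\geq 22$. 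Your detour is not wrong in spirit, but the bookkeeping you flag as the main hurdle genuinely bites at the stated bound: in the worst case (the two small players each play $1+1=2$ in both of the rounds your schedule spans) your prescription consumes up to $4+12+4+4=24$ ones before the second $1+1=2$ of the tail, so legality does not close at $n\geq 22$ as scheduled; the in-block prescription above removes exactly this difficulty. (Your remark that a separate argument is needed when $p$ exceeds the game length is a fair point, but note the paper's proof silently makes the same assumption that the referenced players all get turns.)
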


\begin{proof}
We prove this lemma by considering $2$ cases.\\

\begin{itemize}
\item[Case 1.] If the $2$ players of the $2$-player alliance are not consecutive, then the big alliance will be separated by $2$ parts.

Note that the big alliance has at least 6 players. By pigeonhole principle, there will be at least one part having at least $3$ players (let's call it big part).

Suppose $2$-player alliance has a winning strategy. Then for any winning path, there exists a player $q$ in the $2$-player alliance who takes the last step.

Suppose the first player in the big part is player $a$, and let's do the following starting from player $a$'s first move:

Player $a: 1+1=2$ $($Step $1)$.

Player $a+1: 1+1=2$ $($Step $2)$.

Player $a+2: 2+2=1+3$ $($Step $3)$.

Note that instead of doing $1+1=2,$ player $a+1$ can do $1+2=3$ instead in step $2$. Now player $q-1$ becomes the player who takes the last step. Since the $2$ players in the $2$-player alliance are not consecutive, player $q-1$ belongs to the big alliance, so the big alliance now has the winning strategy, which contradicts with our assumption.

Therefore, we proved case $1$ by using stealing strategy.\\

\item[Case 2.] If the $2$ players of the $2$-player alliance are consecutive, then the $p-2$ players of the big alliance are consecutive.

Suppose $2$-player alliance has a winning strategy, then for any winning path, there exists a player $q$ from the $2$-player alliance who takes the last step.

Suppose the big alliance's $p-2$ consecutive players start with player $a$.

$($Note the the big alliance has at least $6$ players, so players $a, a+1, a+2, a+3, a+4, a+5$ are all in the big alliance$).$

Let's do the following starting from player $a$'s first move:

Player $a: 1+1=2$ $($Step $1)$.

Player $a+1: 1+1=2$ $($Step $2)$.

Player $a+2: 1+1=2$ $($Step $3)$.

Player $a+3: 1+1=2$ $($Step $4)$.

Player $a+4: 2+2=1+3$ $($Step $5)$.

Player $a+5: 2+2=1+3$ $($Step $6)$.

Note that player $a+2$ in step $3$ and player $a+3$ in step $4$ can both do $1+2=3$ instead.

If they both do so, then player $q-2$ becomes the player who takes the last step. Since the $2$ players in the $2$-player alliance are consecutive, player $q-2$ belongs to the big alliance. Therefore, the big alliance now has the winning strategy, which contradicts with our assumption.

By using the stealing strategy, we proved case $2$.
\end{itemize}
Thus, by our analysis in Case $1$, and Case $2$, Lemma \ref{lem6} is proved.
\end{proof}

By Lemmas \ref{lem5} and \ref{lem6}, Theorem \ref{thm5} is proved.

\subsection{Proof Of Theorem \ref{thm6}}

Say we have an alliance $a$ with over two-thirds of the players. For a sufficiently large $n$, it can then force the creation of an arbitrary number of $2$'s eventually, as players on the alliance can each produce at least one per round: if each plays $1 + 1 = 2$, opposed players can each remove only two per round by playing $2 + 2 = 1 + 3$, meaning each round, alliance $a$ can net increase the number of $2$'s by at least one.

As such, at some point, alliance $a$ can force the creation of at least $b$ $2$'s. By assumption, alliance $a$ has at least $2b$ subsequent players. Say we have at least $b$ $2$'s and are about to begin the turns of those players.

The first $b$ players could all play $1+1=2$, and the next $b$ players could all play $2+2=1+3$, as the first $b$ players would create $b$ $2$'s and the next $b$ players would use up those and the preexisting $b$ $2$'s. Let us say the turn after this is turn $2b$, changing what we call turn $0$ to make this the case.

Now assume the opposing players have a winning strategy. In that case, after this, there would be a winning strategy from the resultant state for an alliance that goes when the opposed players do. In particular, this alliance goes on turns $2b+p_1, 2b+p_2,$ $2b+p_3, \dots, 2b+p_n$, where $p_1, p_2, \dots$ are some numbers selected to make this the case.

Now, note that the $2b$ players can instead play as follows: the first $b$ can all play $1+2=3$, resulting in the same state as the one we got to after all $2b$ players last time. As such, the same strategy as the one used previously gives the win to the alliance that goes on turns ${b+p_1, b+p_2, b+p_3, \dots, b+p_n}$.

By assumption, alliance $a$ has all players $b$ before a player from the opposition. We know the players with turns $2b+p_1, 2b+p_2, 2b+p_3, \dots, 2b+p_n$ are opposed to alliance $a$, so the players with turns $b+p_1, b+p_2, b+p_3, \dots, b+p_n$ must be on alliance $a$.

As such, this would be a winning strategy for alliance $a$, contradicting the assumption that the opposed players have a winning strategy. As such, alliance $a$ must have a winning strategy.

\subsection{Proof Of Theorem \ref{thm7}}

Assume we have an alliance $a$ with $3b$ consecutive players at some point, and $n \geq 2p+4b$.

First, let's examine the first turn of the $3b$ players. As $n$ is at least $2p+4b$, regardless of when they start, there will be at least enough $1$'s in the game for the first $2b$ of them to play $1+1=2$. If they do so, the next $b$ could all play $2+2=1+3$. Let us say the turn after this is turn $3b$, changing what we call turn $0$ to make this the case.

Now assume the opposing players have a winning strategy. In that case, after this, there would be a winning strategy from the resultant state for an alliance that goes when the opposed players do. In particular, this alliance goes on turns $3b+p_1, 3b+p_2,$ $3b+p_3, \dots, 3b+p_n$, where $p_1, p_2, \dots$ are some numbers selected to make this the case.

Now, note that the $3b$ players can instead play as follows: the first $b$ players can all play $1+1=2$ and the next $b$ players can all play $1+2=3$, resulting in the same state as the one we got to after all $3b$ players last time.

As such, the same strategy as the one used previously gives the win to the alliance that goes on turns $2b+p_1, 2b+p_2, 2b+p_3, \dots, 2b+p_n$. By assumption, alliance $a$ has all players $b$ before a player from the opposition. We know the players with turns $3b+p_1, 3b+p_2, 3b+p_3, \dots, 3b+p_n$ are opposed to alliance $a$, so the players with turns ${2b+p_1, 2b+p_2, 2b+p_3, \dots, 2b+p_n}$ must be on alliance $a$.

As such, this would be a winning strategy for alliance $a$, contradicting the assumption that the opposed players have a winning strategy. As such, alliance $a$ must have a winning strategy.

\subsection{Proof Of Theorem \ref{thm8}}

Let the $2d$ players in the big alliance all do $1+1=2$ for the first $d$ rounds (one round means every player takes a move, and we define the first round starting from the big alliance's first move).

If in any of the first $d$ round, the $d$ consecutive players in the small alliance all do $2+2=1+3$, then we can directly let the second half of the $2d$ players in the big alliance (which is $d$ consecutive players) all do $1+2=3$ instead in the next round.

In this case, suppose that small alliance has a winning strategy. Then for any winning path, there exists a player $q$ from the small alliance who takes the last step.

Note that player $q$ belongs to small alliance, so player $q-d$ belongs to big alliance. Since player $q$'s last winning step becomes player $q-d$'s last step, so the big alliance now has the winning strategy, which contradicts with our assumption.

Otherwise, if in each of the first $d$ rounds, there is at least one player from small alliance who does not take $2+2=1+3$, which means that this step will take away at most one $2$. Then there will be at least one $2$ generated in each round.

As a result, after $d$ rounds, there are at least $d$ $2$'s generated for the stealing. After that, in the $(d+1)$th round, we can let the first half of the players in big alliance (which are the first $d$ consecutive players) all do $1+1=2$, and the second half of the big alliance (which are last $d$ consecutive players) all do $2+2=1+3$.

In this case, suppose that the small alliance has a winning strategy. Then for any winning path, there exists a player $q$ from the small alliance who takes the last step.

Note that player $q$ belongs to the small alliance, so player $q-d$ belongs to big alliance. In this round, the first half of the big alliance (which are first the $d$ consecutive players) can all do $1+2=3$ instead, so player $q$'s last winning step becomes player $q-d$'s last step. As a result, the big alliance now has the winning strategy, which contradicts with our assumption.

Therefore, the big alliance always has a winning strategy, so we have proved this theorem.\\

\ \\

\end{document}